\definecolor{labelkey}{rgb}{1,0,0}
\DeclareSymbolFont{cyrletters}{OT2}{wncyr}{m}{n}
\DeclareMathSymbol{\sha}{\mathalpha}{cyrletters}{"58}
\DeclareMathSymbol{\tha}{\mathalpha}{cyrletters}{"51}
\DeclareMathSymbol{\bha}{\mathalpha}{cyrletters}{"42}
\DeclareMathAlphabet{\mathcalligra}{T1}{calligra}{m}{n}
\numberwithin{equation}{section}
\newcommand{\lra}{\longrightarrow}
\newcommand{\isoto}{\overset{\!\be\sim}{\to}}
\newcommand{\ksep}{K^{\rm s}}
\def\be{\kern -.1em}
\def\le{\kern 0.03em}
\def\lle{\kern 0.015em}
\def\lbe{\kern -.03em}
\newcommand{\Z}{{\mathbb Z}}
\newcommand{\Q}{{\mathbb Q}}
\newcommand{\spec}{\mathrm{ Spec}\,}
\newcommand{\krn}{\mathrm{Ker}\,}
\newcommand{\img}{\mathrm{Im}\e}
\newcommand{\cok}{\mathrm{Coker}\,}
\newcommand{\Hom}{{\mathrm{Hom}}}
\newcommand{\knr}{K^{\lle\mr{nr}}_{\lbe v}}
\newcommand\e{\kern 0.08em}
\newcommand{\s}{\mathcal }
\newcommand{\mr}{\mathrm }
\newcommand{\pic}{\mathrm{Pic}}
\newcommand{\br}{\mathrm{Br}}
\def\e{\kern 0.1em}
\newcommand{\nr}{\mr{nr}}
\newtheorem{lemma}{Lemma}[section]
\newtheorem{theorem}[lemma]{Theorem}
\newtheorem{proposition-definition}[lemma]{Proposition-Definition}
\newtheorem{corollary}[lemma]{Corollary}
\newtheorem{proposition}[lemma]{Proposition}
\theoremstyle{definition}
\theoremstyle{remark}
\newtheorem{remark}[lemma]{Remark}
\begin{document}

\input xy     
\xyoption{all} 

\title[Brauer groups and N\'eron class groups]{Brauer groups and N\'eron class groups}

\author{Cristian D. Gonz\'alez-Avil\'es}
\address{Departamento de Matem\'aticas, Universidad de La Serena, Cisternas 1200, La Serena 1700000, Chile}
\email{cgonzalez@userena.cl}
\thanks{The author is partially supported by Fondecyt grant 1160004.}
\date{\today}

\subjclass[2010]{Primary 11G25, 14G25}
\keywords{Brauer groups, Neron class groups, Jacobian, curves}

\dedicatory{A la memoria de mi madre, Gloria Avil\'es Cruz (1937-2019)}

\begin{abstract} Let $K$ be a global field and let $S$ be a finite set of primes of $K$ containing the archimedean primes. We generalize the duality theorem established in \cite{ga12} for the N\'eron $S$-class group of an abelian variety $A$ over $K$ by removing the hypothesis that the Tate-Shafarevich group of $A$ is finite. We also derive an exact sequence that relates the indicated group associated to the Jacobian variety of a proper, smooth and geometrically connected curve $X$ over $K$ to a certain finite subquotient of the Brauer group of $X$.
\end{abstract}

\maketitle

\topmargin -1cm

\smallskip

\section*{0. Introduction}

Let $K$ be a global field, let $S$ be a finite set of primes of $K$ containing the archimedean primes and let $\mathcal O_{K,\e S}$ be the ring of $S$-integers in $K$. For every prime $v$ of $K$, let $K_{\lbe v}$ be the completion of $K$ at $v$. If $v\notin S$, we will write $\mathcal O_{\lbe v}$ for the ring of integers of $K_{\lbe v}$, $k(v)$ for the corresponding residue field and $\knr$ for the maximal unramified extension of $K_{v}$ inside a fixed separable closure of $K_{\lbe v}$. Now let $A$ be a smooth, connected and commutative algebraic group over $K$ which admits a N\'eron model $\s A$ over $\mathcal O_{K,\e S}$. The {\it N\'eron $S$-class group of $A$} is the abelian group
\begin{equation}\label{ncg}
C_{\lbe A\le,\e S}=\cok\!\!\be\left[\lbe A(K)\overset{\!\rho}{\to}\bigoplus_{v\notin S}\Phi_{\lbe v}(\be A\lbe)(k(v))\right]\!,
\end{equation}
where, for every prime $v\notin S$, $\Phi_{\lbe v}(\be A\lbe)$ is the \'etale $k(v)$-group scheme of connected components of $\s A_{\e k(\lbe v)}$ and the $v$-component of the map $\rho$ in \eqref{ncg} is the canonical reduction map $A(K)\to\Phi_{\lbe v}(A)(k(v))$. When $A=\mathbb G_{m,\e K}$ and $S\neq\emptyset$, \eqref{ncg} is canonically isomorphic to the ideal $S$-class group of $K$ \cite[Remark 3.3]{ga12}. Thus \eqref{ncg} may be regarded as a generalization of the ideal $S$-class group of $K$ associated to the pair $(A\le,\lbe K\lle)$. When $A$ is an arbitrary algebraic $K$-torus (i.e., is \'etale-locally isomorphic to $\mathbb G_{\lbe m,\e K}^{r}$ for some $r\geq 1$) and $S\neq\emptyset$, the group \eqref{ncg} was discussed in \cite{ga08} and \cite{ga10}, where the interested reader can find detailed information about \eqref{ncg} in this case. When $A$ is an abelian variety over $K$ with finite Tate-Shafarevich group $\sha^{\le 1}\be(A)$, we defined in \cite{ga12} a canonical perfect pairing of finite abelian groups $C_{\lbe A\le,\e S}\times C^{\le\lle 1}_{\! A^{t}\be,\e S}\to \Q/\Z$, where the group $C^{\le\lle 1}_{\! A^{\lbe t}\be,\e S}$ \eqref{c1p} attached to the abelian variety $A^{t}$ dual to $A$ was expressed (rather inconveniently) as an intersection of certain subgroups of $\prod_{\e v\notin S}\be H^{1}\be(K_{\lbe v},A^{t})$. In this paper we extend the above duality theorem to all abelian varieties $A$ over $K$ (i.e., we dispense with the assumption that $\sha^{\le 1}\be(A)$ is finite) and also obtain a more natural description of $C^{\le\lle 1}_{\! A^{t}\be,\e S}$. Namely, set
\begin{equation}\label{c1pp}
\sha^{1}_{\le\nr}\lbe(S,A^{t}\lle)=\krn\!\!\left[\lle H^{1}\be(K,A^{t}\lle)\to
\prod_{v\notin S}H^{1}\lbe(\knr,A^{t}\e)\times \prod_{v\in S}H^{1}\lbe(K_{v},A^{t}\lle)\right].
\end{equation}
In Section \ref{first} we show that there exists a canonical isomorphism of finite abelian groups $C^{\le\lle 1}_{\! A^{t}\be,\e S}\simeq \sha^{1}_{\le\nr}\lbe(S,A^{t}\lle)/\e\sha^{1}\lbe(\lbe A^{t}\lle)$. Then, in Section \ref{tres}, we define a canonical map $\varphi_{\be A,\e S}\colon \e T\e\sha^{1}\lbe(\lbe A\le)_{\rm div}\to C_{\be A\le,\le S}$ \eqref{fi}, where $T\e\sha^{1}\lbe(\lbe A\le)_{\rm div}$ is the total Tate module of the group $\sha^{\le 1}\lbe(\lbe A\le)_{\rm div}$ of divisible elements of $\sha^{\le 1}\be(A)$, and establish the following generalization of \cite[Theorem 4.9]{ga12}:

\begin{theorem}\label{uno} {\rm{(=\le Theorem \ref{main1}\,)}} Let $A$ be an abelian variety over $K$ and let $A^{t}$ be the abelian variety dual to $A$. Then there exists a canonical perfect pairing of finite abelian groups
\[
C_{\lbe A\le,\e S}/\varphi_{\be\lle A\le,\le S}\le(\e T\e\sha^{\le 1}\lbe(\lbe A\le)_{\rm div}\lbe)\times \sha^{\le 1}_{\le\nr}\lbe(S,A^{t}\lle)/\e\sha^{\le 1}\be(\lbe A^{t}\lle)\to \Q/\Z,
\]
where $C_{\lbe A\le,\e S}$ is the group \eqref{ncg}, $\varphi_{\be A,\e S}$ is the map \eqref{fi} and $\sha^{\le 1}_{\le\nr}\lbe(S,A^{t}\e)$ is the group \eqref{c1pp}.
\end{theorem}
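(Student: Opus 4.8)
The plan is to write the pairing down explicitly, reduce its perfectness to the determination of its left and right radicals, and handle the non-finiteness of $\sha^{1}\lbe(\lbe A\le)$ by a passage to the limit over the finite-level Poitou--Tate pairings. Using the isomorphism $C^{\le\lle 1}_{\! A^{t}\be,\e S}\simeq\sha^{1}_{\le\nr}\lbe(S,A^{t}\lle)/\e\sha^{1}\lbe(\lbe A^{t}\lle)$ of Section \ref{first}, it suffices to exhibit a canonical pairing of the finite groups $C_{\lbe A\le,\e S}$ and $\sha^{1}_{\le\nr}\lbe(S,A^{t}\lle)/\e\sha^{1}\lbe(\lbe A^{t}\lle)$ and to identify its two kernels. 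The pairing is the obvious one: represent a class of $C_{\lbe A\le,\e S}$ by a family $(P_{v})_{v\notin S}$ with $P_{v}\in A(K_{v})$, almost all $P_{v}$ reducing to the identity component of $\s A_{\e k(\lbe v)}$, and put $\langle (P_{v}),\xi\rangle=\sum_{v\notin S}\langle P_{v},\xi|_{v}\rangle_{v}$ with $\langle\ ,\ \rangle_{v}\colon A(K_{v})\times H^{1}(K_{v},A^{t})\to\Q/\Z$ the local Tate pairing. I would then check well-definedness: the sum is finite because the unramified subgroup $\img[\lle H^{1}(\knr/K_{v},A^{t})\to H^{1}(K_{v},A^{t})\lle]$ vanishes at the primes of good reduction; the value is independent of the lift of a given class from $\bigoplus_{v}\Phi_{\lbe v}(\be A\lbe)(k(v))$ to $\prod_{v}A(K_{v})$, because under $\langle\ ,\ \rangle_{v}$ an unramified class annihilates $\krn[A(K_{v})\to\Phi_{\lbe v}(\be A\lbe)(k(v))]$; and it descends to $C_{\lbe A\le,\e S}$ by global reciprocity, the terms at $v\in S$ dropping out since $\xi|_{v}=0$ there.

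For the right kernel I would argue place by place. If $\overline{\xi}$ pairs trivially with all of $C_{\lbe A\le,\e S}$, then, testing against classes supported at a single $v\notin S$, one sees that $\xi|_{v}$ pairs to zero with every element of $\Phi_{\lbe v}(\be A\lbe)(k(v))$, hence --- since $\xi|_{v}$ is unramified and so pairs to zero with $\krn[A(K_{v})\to\Phi_{\lbe v}(\be A\lbe)(k(v))]$ as well --- with every element of $A(K_{v})$; perfectness of local Tate duality then gives $\xi|_{v}=0$ in $H^{1}(K_{v},A^{t})$ for all $v\notin S$, and since $\xi|_{v}=0$ for $v\in S$ too, $\xi\in\sha^{1}\lbe(\lbe A^{t}\lle)$, i.e. $\overline{\xi}=0$. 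Therefore the homomorphism adjoint to the pairing, $C_{\lbe A\le,\e S}\to\Hom\be\bigl(\sha^{1}_{\le\nr}\lbe(S,A^{t}\lle)/\sha^{1}\lbe(\lbe A^{t}\lle),\,\Q/\Z\bigr)$, is surjective, and the whole assertion reduces to identifying its kernel with $\varphi_{\be\lle A\le,\le S}\le(\e T\e\sha^{\le 1}\lbe(\lbe A\le)_{\rm div}\lbe)$.

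The inclusion $\varphi_{\be\lle A\le,\le S}\le(\e T\e\sha^{\le 1}\lbe(\lbe A\le)_{\rm div}\lbe)\subseteq(\text{left kernel})$ is immediate from the construction of $\varphi_{\be A,\e S}$ in \eqref{fi}. Writing $T\e\sha^{\le 1}\lbe(\lbe A\le)_{\rm div}=\varprojlim_{\lle n}\sha^{1}\lbe(\lbe A\le)[n]$, an element is a coherent system $(s_{n})$, and $\varphi_{\be A,\e S}((s_{n}))$ is the reduction of a coherent family of local points whose Kummer classes in $H^{1}(K_{v},A[n])$ are restrictions of global lifts of the $s_{n}$ to $H^{1}(K,A[n])$. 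Pairing this against $\xi$ and invoking the standard compatibility of the local pairings attached to $A$ and to the finite group schemes $A[n]$, $A^{t}[n]$, together with Poitou--Tate reciprocity for $A[n]$, one rewrites the sum over $v\notin S$ as a sum of terms at the places of $S$, each vanishing because the $s_{n}$ are locally trivial and $\xi|_{v}=0$ there.

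The reverse inclusion, $(\text{left kernel})\subseteq\varphi_{\be\lle A\le,\le S}\le(\e T\e\sha^{\le 1}\lbe(\lbe A\le)_{\rm div}\lbe)$, is the heart of the matter and is where the finiteness hypothesis of \cite{ga12} is genuinely removed. Given $c$ in the left kernel, one must produce a coherent system $(s_{n})\in\varprojlim_{\lle n}\sha^{1}\lbe(\lbe A\le)[n]$ with $\varphi_{\be A,\e S}((s_{n}))=c$. For each $n$, the Poitou--Tate nine-term exact sequence for the finite group scheme $A[n]$ (whose Cartier dual is $A^{t}[n]$), combined with the Kummer sequences $0\to A[n]\to A\xrightarrow{n}A\to 0$ and the reduction maps $A(K_{v})\to\Phi_{\lbe v}(\be A\lbe)(k(v))$, yields a commutative diagram with exact rows from which the orthogonality of $c$ to the level-$n$ dual $\sha^{1}_{\le\nr}$-group forces $c$ into the image of $\sha^{1}\lbe(\lbe A\le)[n]$ under the level-$n$ component of $\varphi_{\be A,\e S}$. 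The main obstacle is to choose such preimages compatibly in $n$: I would arrange this by a Mittag--Leffler argument, the relevant $\varprojlim^{1}$-terms vanishing because all the groups in play --- $C_{\lbe A\le,\e S}$, the $\Phi_{\lbe v}(\be A\lbe)(k(v))$, and the $\sha^{1}\lbe(\lbe A\le)[n]$ --- are finite. The resulting element of $T\e\sha^{\le 1}\lbe(\lbe A\le)$ maps to $c$ and lies in $T\e\sha^{\le 1}\lbe(\lbe A\le)_{\rm div}$, since the Tate module of the finite group $\sha^{1}\lbe(\lbe A\le)/\sha^{1}\lbe(\lbe A\le)_{\rm div}$ vanishes. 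Together with the surjectivity above, the two inclusions yield the perfect pairing; when $\sha^{1}\lbe(\lbe A\le)$ is finite one has $T\e\sha^{\le 1}\lbe(\lbe A\le)_{\rm div}=0$ and the statement reduces to \cite[Theorem 4.9]{ga12}. A secondary point calling for care is the $p$-primary part, where the subtle behaviour of Grothendieck's pairing on component groups over the finite residue fields may necessitate a separate treatment of the $p$-parts of the groups involved.
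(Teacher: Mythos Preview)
Your route differs materially from the paper's. The paper never writes down the pairing explicitly nor computes its radicals; instead it builds the isomorphism $\cok\varphi_{A,S}\isoto(\sha^{1}_{\nr}(S,A^{t})/\sha^{1}(A^{t}))^{*}$ as a chain of formal identifications. The key arithmetic input is the generalized Cassels--Tate dual exact sequence of \cite{gat07}, which supplies an isomorphism $T\,\mathrm{Sel}(A)\isoto\tha^{1}(A^{t})^{*}$ \eqref{nes2}. Once that is in hand, Proposition~\ref{exca} identifies $\sha^{1}_{\nr}(S,A^{t})/\sha^{1}(A^{t})$ with $\ker\beta_{A^{t}\!,\,S}$, and dualizing $\beta_{A^{t}\!,\,S}$ (together with Lemma~\ref{gd} and Lemma~\ref{comm}) gives $\cok\gamma_{A,S}\simeq(\sha^{1}_{\nr}/\sha^{1})^{*}$; the passage from $\cok\gamma_{A,S}$ to $\cok\varphi_{A,S}$ is the elementary diagram chase around \eqref{gdi}. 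No Mittag--Leffler argument is needed and all primes, including the residue characteristic, are handled uniformly via McCallum's perfectness of Grothendieck's pairing.

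Your right-kernel computation is fine and is, in effect, the content of Proposition~\ref{exca} read through the Grothendieck--Tate compatibility. The problem is the reverse inclusion for the left kernel. First, your ``level-$n$ component of $\varphi_{A,S}$'' is not well defined: from $\mathrm{Sel}_{n}(A)$ one lands in $\prod_{v}A(K_{v})/n$, but the reduction $A(K_{v})\to\Phi_{v}(A)(k(v))$ does \emph{not} factor through $A(K_{v})/n$, so there is no canonical map $\sha^{1}(A)[n]\to C_{A,S}$; only the limit map $T\,\mathrm{Sel}(A)\to\prod_{v}H^{0}(K_{v},A)\to\prod_{v}\Phi_{v}(A)(k(v))$ makes sense. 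Second, and more seriously, the step ``orthogonality of $c$ at level $n$ forces $c$ into the image'' is precisely where the content of \cite{gat07} is hidden. To globalize the local family $(\alpha_{v,n})\in\prod_{v}A(K_{v})/n$ attached to a lift of $c$, Poitou--Tate requires $\sum_{v}\langle\alpha_{v,n},\beta_{v}\rangle=0$ for \emph{all} $\beta\in H^{1}(K,A^{t}[n])$, not merely for those whose image lies in $\sha^{1}_{\nr}(S,A^{t})$; the hypothesis on $c$ gives you only the latter, so there is a genuine gap to close before any Mittag--Leffler step can even begin. Filling that gap and then organizing the compatibilities in $n$ amounts to reproving, inside your argument, the piece of the Cassels--Tate dual sequence that the paper simply quotes. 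Your approach is not wrong in spirit, but as written the hard direction is a sketch of \cite{gat07} rather than a proof of the theorem.
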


The proof of Theorem \ref{uno} is completely different from, and substantially simpler than, the proof of \cite[Theorem 4.9]{ga12}. Its main ingredient is the generalized Cassels-Tate dual exact sequence established in \cite{gat07}.

\medskip

Very little is known about the group $C_{\lbe A\le,\e S}$ for general abelian varieties $A$, although some information about the (in general, larger) group 
\[
\cok\!\!\be\left[\lbe A(K)_{\rm tors}\to\bigoplus_{v\notin S}\Phi_{\lbe v}(\be A\lbe)(k(v))\right]
\]
can be gleaned from \cite{lor} for certain specific choices of $A$ and $K$. See \cite[Remark 2.28]{lor}. Thus the second main result of this paper, namely Theorem \ref{dos} below, is of interest. In order to state it, we need the following notation.

\smallskip

Let $X$ be a proper, smooth and geometrically connected curve over $K$. If $v\notin S$,
let $\delta^{\le\prime}_{v}$ denote the period of $X_{\be K_{\lbe v}}$, i.e., the least positive degree of a divisor class in $(\le{\rm Pic}\e X_{\lbe K_{\lbe v}^{\rm s}}\be)^{{\rm Gal}\lle(K_{\lbe v}^{\le\rm s}/K_{\lbe v})}$, and let $\delta_{v}^{\le\prime\le\lle\rm nr}$ denote the corresponding quantity associated to $X_{\be K_{v}^{\nr}}$. Then $\delta_{v}^{\le\prime\le\lle\rm nr}$ divides $\delta^{\le\prime}_{v}$ and we set
\begin{equation}\label{dv}
d_{\le v}=\delta^{\le\prime}_{v}/\delta_{v}^{\le\prime\le\lle\rm nr}.
\end{equation}
Now let
\begin{equation}\label{d}
d=\displaystyle{\prod_{v\notin S}}\, d_{\le v}.
\end{equation}
The structural morphism
$X_{\be K_{\lbe v}}\to\spec K_{\lbe v}$ induces a pullback homomorphism between the associated Brauer groups $\br\e K_{\lbe v}\to \br\le X_{\lbe K_{\lbe v}}$. Set
\begin{equation}\label{bra}
\br_{\lbe\textrm{a}}\le  X_{\lbe K_{\lbe v}}=\cok\!\be\left[\e\br\le K_{\lbe v}\to \br\le X_{\be K_{\lbe v}}\lle\right]\lbe.
\end{equation}
Next, the canonical morphism
$X_{\be K_{\lbe v}^{\nr}}\to X_{\be K_{\lbe v}}$ induces a pullback homomorphism
$\br_{\lbe\textrm{a}}\le  X_{\lbe K_{\lbe v}}\to \br_{\lbe\textrm{a}}\le  X_{\lbe K_{\lbe v}^{\nr}}$ and we define\,\footnote{ Note that $\br_{\lbe\textrm{a}}\e  X_{\lbe K_{\lbe v}^{\nr}}=\br\e  X_{\lbe K_{\lbe v}^{\nr}}$ since $\br\e K_{\lbe v}^{\nr}=0$ by \cite[X, \S7, Example (b), p.~162]{sl}.}
\[
\br_{\lbe\textrm{a}}\lbe(X_{\be K_{\lbe v}^{\nr}}\be/\be X_{\lbe K_{\lbe v}}\lbe)=\krn\!\!\left[\le\br_{\lbe\textrm{a}}\e  X_{\lbe K_{\lbe v}}\!\be\to \br_{\lbe\textrm{a}}\e  X_{\lbe K_{\lbe v}^{\nr}}\lbe\right]\!.
\]
It is shown in \cite{bis} that there exists a canonical exact sequence of finite abelian groups
\[
0\to\Hom(\le\br_{\lbe\textrm{a}}\lbe(X_{\be K_{\lbe v}^{\nr}}\be/\be X_{\lbe K_{\lbe v}}\lbe),\Q/\Z\e)\to \Phi_{\be v}(\be J\e)(k(v))\to \Z/\lbe d_{\le v}\le\lle\Z\to 0,
\]
where $J$ is the Jacobian variety of $X$ and $d_{\le v}$ is the integer \eqref{dv}. 
In this paper we establish the following global analog of the above exact sequence.

Let
\begin{equation}\label{brasnr}
\bha_{\le\nr}\lbe(S, X\le)=\krn\!\!\lbe\left[\br_{\lbe\textrm{a}}\e X\to
\prod_{v\notin S}\br_{\lbe\textrm{a}}\lle X_{\be\lle K_{v}^{\nr}}\times \prod_{v\in S}\br_{\lbe\textrm{a}}\lle X_{\be\lle K_{v}}\right],
\end{equation} 
where the $v$-component of the indicated map, for $v\notin S$ (respectively, $v\in S\e$), is the pullback homomorphism $\br_{\lbe\textrm{a}}\e X\to \br_{\lbe\textrm{a}}\lle X_{\be K_{v}^{\nr}}$ (respectively, $\br_{\lbe\textrm{a}}\le X\to \br_{\lbe\textrm{a}}\le X_{\be K_{v}}$) induced by the projection $X_{\be K_{v}^{\nr}}\to X$ (respectively,
$X_{\be K_{v}}\to X\e$). Further, let
\begin{equation}\label{bra-sha}
\bha(\lbe X\lbe)=\krn\!\!\lbe\left[\br_{\lbe\textrm{a}}\le X\to
\prod_{\textrm{all}\, v}\br_{\lbe\textrm{a}}\e X_{\be K_{v}}\right].
\end{equation}
Then the following holds.

\begin{theorem} \label{dos} {\rm{(=\le Theorem \ref{main2}\,)}} Assume that the local periods $\delta_{\lbe v}^{\le\prime}$ of $X$ are pairwise coprime. Then there exists a canonical exact sequence of finite abelian groups
\[
0\to\Hom(\e\bha_{\le\nr}\lbe(S, X\le)\lbe/\e\bha(\lbe X\lbe),\Q/\Z\e)\to C_{\be J,\e S}\le/\varphi_{\lbe J,\e S}(\le T\e\sha^{1}\be(\be J\le)_{\rm div}\be)\to\Z/\lbe d\e\Z\to 0,
\]
where the groups $\bha_{\le\nr}(S, X\le)$ and $\bha(\lbe X\lbe)$ are given by \eqref{brasnr} and \eqref{bra-sha}, respectively, $C_{\be J\le,\e S}$ is the N\'eron $S$-class group of the Jacobian variety $J$ of $X$ \eqref{ncg}, $\varphi_{\be\lle J\le,\le S}$ is the map \eqref{fi} associated to $J$ and $d$ is the integer \eqref{d}.
\end{theorem}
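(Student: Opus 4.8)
The plan is to build the claimed exact sequence by splicing together the local exact sequence of Bisson--et al. (the one quoted just before the statement) over all $v\notin S$, and then comparing the result with the definition of the N\'eron $S$-class group $C_{\be J,\e S}$ and with Theorem \ref{uno} applied to $A=J$ (note $J^{t}\simeq J$ canonically for a Jacobian). First I would record, for each $v\notin S$, the exact sequence
\[
0\to\Hom(\br_{\lbe\mr a}(X_{K_{v}^{\nr}}/X_{K_{v}}),\Q/\Z)\to\Phi_{v}(J)(k(v))\to\Z/d_{\le v}\Z\to 0,
\]
take the direct sum over $v\notin S$, and use that, under the coprimality hypothesis on the $\delta_{\lbe v}^{\le\prime}$, the $d_{\le v}$ are pairwise coprime, so $\bigoplus_{v\notin S}\Z/d_{\le v}\Z\simeq\Z/d\e\Z$ by the Chinese Remainder Theorem with $d=\prod_{v\notin S}d_{\le v}$; this is where the hypothesis is really used, and it is the only place the coprimality enters. (One also needs that $d_{\le v}=1$ for all but finitely many $v$, so that $d$ is a well-defined integer and the sum is finite; this follows because $X$ has good reduction outside a finite set, where the period is $1$.)

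Next I would identify the global analog of $\bigoplus_{v}\Hom(\br_{\lbe\mr a}(X_{K_{v}^{\nr}}/X_{K_{v}}),\Q/\Z)$ with $\Hom(\bha_{\le\nr}(S,X)/\bha(X),\Q/\Z)$. The mechanism is Poitou--Tate-type duality for the "abelianized Brauer group" $\br_{\lbe\mr a}X$, which for a curve with Jacobian $J$ is governed by $H^{1}(-,J)$: there is a canonical isomorphism $\br_{\lbe\mr a}X_{L}\simeq H^{1}(L,J)$ for $L$ a field over which $X$ has a rational point, and more generally $\br_{\lbe\mr a}$ sits in an exact sequence involving $H^{1}(-,\pic^{0})=H^{1}(-,J)$ and the local period/index obstruction (this is exactly the content of the quoted local sequence). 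Concretely, $\br_{\lbe\mr a}(X_{K_{v}^{\nr}}/X_{K_{v}})$ is dual to the image of $H^{1}(K_{v}^{\nr}/K_{v},J)\hookrightarrow H^{1}(K_{v},J)$ up to the $\Z/d_{\le v}\Z$ discrepancy, and the unramified-at-$v$, split-at-$S$ conditions defining $\bha_{\le\nr}(S,X)$ match term by term the conditions defining $\sha^{1}_{\le\nr}(S,J)$ in \eqref{c1pp}. Thus I expect a canonical isomorphism $\bha_{\le\nr}(S,X)/\bha(X)\simeq\sha^{1}_{\le\nr}(S,J)/\sha^{1}(J)$, compatible with the identification $\bha(X)\simeq\sha^{1}(J)$; plugging this into Theorem \ref{uno} gives the left-hand injection $\Hom(\bha_{\le\nr}(S,X)/\bha(X),\Q/\Z)\hookrightarrow C_{\be J,\e S}/\varphi_{\be J,\e S}(T\sha^{1}(J)_{\mr{div}})$ as the Pontryagin dual of the perfect pairing there.

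Finally I would assemble the three-term sequence. Theorem \ref{uno} gives $C_{\be J,\e S}/\varphi_{\be J,\e S}(\cdots)$ as the Pontryagin dual of $\sha^{1}_{\le\nr}(S,J)/\sha^{1}(J)$; the local sequences summed over $v\notin S$ give an exact sequence
\[
0\to\bigoplus_{v\notin S}\Hom(\br_{\lbe\mr a}(X_{K_{v}^{\nr}}/X_{K_{v}}),\Q/\Z)\to\bigoplus_{v\notin S}\Phi_{v}(J)(k(v))\to\Z/d\e\Z\to 0,
\]
and the cokernel of $\rho\colon J(K)\to\bigoplus_{v\notin S}\Phi_{v}(J)(k(v))$ is $C_{\be J,\e S}$ by definition \eqref{ncg}. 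The task is then a diagram chase showing that passing to the quotient by $\varphi_{\be J,\e S}(T\sha^{1}(J)_{\mr{div}})$ and by the image of $\rho$ is compatible with the exactness of the local sequence, i.e. that the composite $\Hom(\br_{\lbe\mr a}(X_{K_{v}^{\nr}}/X_{K_{v}}),\Q/\Z)_{\,\mathrm{global}}\to C_{\be J,\e S}\to C_{\be J,\e S}/\varphi_{\be J,\e S}(\cdots)$ has image exactly the kernel of the surjection to $\Z/d\e\Z$, and that this surjection is well defined on the quotient (which holds because $T\sha^{1}(J)_{\mr{div}}$, being built from a divisible group, maps to $0$ in the finite group $\Z/d\e\Z$). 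The main obstacle I anticipate is not the Chinese Remainder step but the second one: making the duality $\bha_{\le\nr}(S,X)/\bha(X)\simeq\sha^{1}_{\le\nr}(S,J)/\sha^{1}(J)$ canonical and checking that the local pieces of the Bisson sequence are compatible with the global pairing of Theorem \ref{uno} on the nose (signs, and the interplay of $\br_{\lbe\mr a}$ versus $H^{1}(-,J)$ when $X$ has no local point), so that the two exact sequences can be spliced without ambiguity.
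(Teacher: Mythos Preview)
Your proposal contains a genuine error in the identification step. You assert a canonical isomorphism $\bha_{\le\nr}(S,X)/\bha(X)\simeq\sha^{1}_{\le\nr}(S,J)/\sha^{1}(J)$ and, compatibly, $\bha(X)\simeq\sha^{1}(J)$. This is false: the functorial isomorphism is $\br_{\lbe\mr a}\,X_{L}\simeq H^{1}(L,P)$ with $P=\pic_{X/K}$ the full Picard scheme, \emph{not} $H^{1}(L,J)$ (see \eqref{bra-iso}); hence $\bha(X)\simeq\sha^{1}(P)$ and $\bha_{\le\nr}(S,X)\simeq\sha^{1}_{\le\nr}(S,P)$. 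The sequence $0\to J\to P\to\Z_{K}\to 0$ then shows that, under the coprimality hypothesis, $\sha^{1}_{\le\nr}(S,J)/\sha^{1}(J)$ and $\sha^{1}_{\le\nr}(S,P)/\sha^{1}(P)$ differ by exactly $\Z/d\,\Z$ (this is Proposition~\ref{key1}). If your claimed isomorphism held, Theorem~\ref{uno} by itself would give $C_{J,S}/\varphi_{J,S}(\cdots)\simeq\Hom(\bha_{\le\nr}(S,X)/\bha(X),\Q/\Z)$ with no $\Z/d\,\Z$ cokernel, contradicting the very statement you are trying to prove. The same error undermines the final diagram chase: the map from $\bigoplus_{v}\Hom(\br_{\lbe\mr a}(X_{K_{v}^{\nr}}/X_{K_{v}}),\Q/\Z)$ to $\Hom(\bha_{\le\nr}(S,X)/\bha(X),\Q/\Z)$ is not an identification but a nontrivial quotient map, and you have not said by what.

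The paper's route avoids summing the local Biswas sequences altogether. It works purely globally: apply the snake lemma to the diagrams arising from $0\to J\to P\to\Z_{K}\to 0$ (sequences \eqref{s1} and \eqref{s2}) to obtain (Proposition~\ref{key2}) the exact sequence
\[
0\to\Z/d\,\Z\to\sha^{1}_{\le\nr}(S,J)/\sha^{1}(J)\to\bha_{\le\nr}(S,X)/\bha(X)\to 0,
\]
then dualize this sequence of finite groups and invoke Theorem~\ref{uno} with $J^{t}\simeq J$. The coprimality hypothesis enters only to force $\cok D=0$ and $\cok D_{\nr}^{S}=0$, which makes the snake-lemma rows short exact. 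Your strategy of assembling the local sequences could in principle be salvaged once you replace $J$ by $P$ in the Brauer-group identifications, but carrying out the passage from $\bigoplus_{v}\Phi_{v}(J)(k(v))$ to $C_{J,S}/\varphi_{J,S}(\cdots)$ while tracking what happens on the left then amounts to redoing the paper's global $P$-versus-$J$ comparison together with the proof of Theorem~\ref{uno}.
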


\smallskip

In the last section of the paper we briefly discuss conjectural connections (suggested by the referee) that should exist between the group \eqref{ncg} and the defect of weak approximation on an abelian variety $A$.

\section*{Acknowledgement} I thank the referee for several helpful suggestions that led to an improved presentation of this paper.

\section{Preliminaries}\label{first}

If $B$ is a topological abelian group, its (Pontryagin) dual is the topological abelian group
$B^{\le *}=\text{Hom}_{\e\text{cont.}}(B,\Q/\Z)$ endowed with the
compact-open topology, where $\Q/\Z$ is given the discrete
topology. If $\{B_{i}\colon i\in I\}$
is a collection of discrete and torsion abelian groups, we will make the identification
\begin{equation}\label{ps}
\big(\bigoplus_{\e i\in I} B_{i}\big)^{\! *}=\prod_{\e i\in I}B_{i}^{\le *}
\end{equation}
via the isomorphism of \cite[Lemma 2.9.4(b), p.~64]{rl}. The dual of a morphism of topological abelian groups $f\colon B\to C$ will be denoted by $f^{\le *}\colon C^{\le\lle *}\to B^{\le *}$. If $B$ and $C$ are discrete and torsion, then there exists a canonical isomorphism of profinite abelian groups
\begin{equation}\label{prof}
\cok f^{\le *}\isoto (\e\krn f\e)^{*}.
\end{equation}
Dually, if $B$ and $C$ are profinite, then there exists a canonical isomorphism of discrete and torsion abelian groups
\begin{equation}\label{dis}
\krn f\isoto(\cok f^{\le *})^{*}.
\end{equation}
See \cite[Theorem 23.7, p.~196]{st} and \cite[III, \S2.8, Remark 1, p.~237\e]{bou}.

If $n$ is a positive integer, we will write $B_{\le n}$ for the $n$-torsion subgroup
of $B$, $B/n$ for $B/nB$ and $B_{\lle\rm div}$ for the subgroup of divisible elements of $B$. The {\it total Tate module of $B$} is the group $T B=\varprojlim B_{\le n}$, where the inverse limit is taken over all positive integers $n$ ordered by divisibility. Since $B_{\le n}=\Hom(\Z/n,B)$ and $\varinjlim \Z/n=\varinjlim \frac{1}{n}\Z/\Z=\Q/\Z$, we have
\begin{equation}\label{tb}
T B=\Hom(\Q/\Z,B\lle)=\Hom(\Q/\Z,B_{\lle\rm div})=TB_{\lle\rm div}.
\end{equation}

\begin{lemma}\label{ker-cok}  Let  $A\overset{\!f}{\to}B\overset{\!g}{\to}C$ be  morphisms in an abelian category $\s A$. Then there exists a canonical exact sequence in $\s A$
\[
0\to \krn f\to \krn\lbe(\e g\be\circ\be f\e)\to \krn g\to\cok f\to \cok\lbe(\e g\be\circ\be f\e) \to \cok g\to 0.
\]
\end{lemma}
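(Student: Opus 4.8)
The plan is to derive the six-term sequence from the snake lemma, applied to a diagram built out of $f$ and $g$, followed by a single splicing step; there is no serious obstacle here, only some bookkeeping to check.

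First I would introduce a corestriction. Let $\img g\hookrightarrow C$ be the canonical monomorphism and let $\overline{g\circ f}\colon A\to\img g$ be the unique morphism with $(\img g\hookrightarrow C)\circ\overline{g\circ f}=g\circ f$; this exists because $g\circ f$ factors through $g$, hence through $\img g$. Since $\img g\hookrightarrow C$ is a monomorphism we have $\krn\overline{g\circ f}=\krn(g\circ f)$, and since it identifies $\img\overline{g\circ f}$ with the subobject $\img(g\circ f)\subseteq C$ we have $\cok\overline{g\circ f}=\img g/\img(g\circ f)$. Now apply the snake lemma to the commutative diagram in $\s A$ with exact rows
\[
\begin{array}{ccccccccc}
& & 0 & \lra & A & \overset{\id}{\lra} & A & \lra & 0\\
& & \downarrow & & {\scriptstyle f}\downarrow & & {\scriptstyle\overline{g\circ f}}\downarrow & & \\
0 & \lra & \krn g & \lra & B & \overset{g}{\lra} & \img g & \lra & 0,
\end{array}
\]
in which the left vertical arrow is the zero morphism, the top row is trivially exact with a monomorphism on the left, and the bottom row is the canonical exact sequence attached to $g$, with an epimorphism on the right. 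The right-hand square commutes because, after composing with $\img g\hookrightarrow C$, both ways around it equal $g\circ f$. The snake lemma then yields a canonical exact sequence
\[
0\lra\krn f\lra\krn(g\circ f)\lra\krn g\lra\cok f\lra\img g/\img(g\circ f)\lra 0,
\]
where I have used the identifications $\krn(0\to\krn g)=0$, $\cok(0\to\krn g)=\krn g$ and the two identities for $\overline{g\circ f}$ recorded above.

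To finish, observe that the chain of subobjects $\img(g\circ f)\subseteq\img g\subseteq C$ gives a canonical short exact sequence $0\to\img g/\img(g\circ f)\to C/\img(g\circ f)\to C/\img g\to 0$, i.e. $0\to\img g/\img(g\circ f)\to\cok(g\circ f)\to\cok g\to 0$. Splicing this with the previous sequence along the common term $\img g/\img(g\circ f)$ — that is, composing the epimorphism $\cok f\to\img g/\img(g\circ f)$ with the monomorphism $\img g/\img(g\circ f)\to\cok(g\circ f)$ — produces the asserted exact sequence, and every map in it (the snake connecting morphism, the morphisms induced by $f$ and $g$, and the canonical inclusions and projections) is manifestly canonical. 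The only points requiring care are the identifications in the preceding paragraph and the commutativity of the right-hand square, all of which are immediate from the universal properties of image and cokernel; alternatively, one may reduce to the category of modules over a ring via the Freyd--Mitchell embedding theorem and verify exactness at each of the six spots by a routine element chase, the maps being the evident inclusion $\krn f\hookrightarrow\krn(g\circ f)$, the projection $\cok(g\circ f)\twoheadrightarrow\cok g$, the maps $\krn(g\circ f)\to\krn g$ and $\cok f\to\cok(g\circ f)$ induced by $f$ and $g$, and the composite $\krn g\hookrightarrow B\twoheadrightarrow\cok f$.
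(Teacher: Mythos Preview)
Your argument is correct: the snake lemma applied to the diagram you set up yields the five-term sequence ending in $\img g/\img(g\circ f)$, and splicing with the short exact sequence coming from the filtration $\img(g\circ f)\subseteq\img g\subseteq C$ gives the desired six-term sequence. The paper itself does not supply a proof but simply refers to \cite[1.2]{bey}; your derivation via the snake lemma is one of the standard routes to this result and is entirely in the spirit of that reference.
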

\begin{proof} See, for example, \cite[1.2]{bey}.
\end{proof}

Recall from the Introduction the global field $K$ and the finite set of primes $S$.
Let $G$ be a commutative $K$-group scheme. If $L$ is a field containing $K$, we define 
\begin{equation}\label{bc}
H^{1}\lbe(L,G\e)=H^{1}\lbe(L,G\!\lbe\times_{\be K}\!\be L\le),
\end{equation}
where the right-hand group is the first fppf cohomology group of the $L$-group scheme $G\!\lbe\times_{\be K}\!\be L$. For every prime $v$ of $K$, the image of a class $\xi\in H^{1}\lbe(K,G\e)$ under the restriction map $H^{1}\lbe(K,G\e)\to  H^{1}\lbe(K_{v},G\e)$ will be denoted by $\xi_{\le v}$. Let $\ksep$ be a fixed separable closure of $K$. For every $v\notin S$, we fix a prime $\overline{v}$ of $\ksep$ lying above $v$ and let $\ksep_{\lbe v}$ be the completion of $\ksep$ at $\overline{v}$. Then $\ksep_{\lbe v}$ is a separable closure of $K_{\lbe v}$ and we will write $\knr$ for the maximal unramified extension of $K_{\lbe v}$ inside $\ksep_{\lbe v}$. Identifying ${\rm Gal}(\ksep_{\lbe v}/\knr)$ with a subgroup of ${\rm Gal}(\ksep_{\lbe v}/\lbe K_{\lbe v})$ in the standard way, we obtain a restriction map $H^{1}\lbe(K_{v},G\e)\to H^{1}\lbe(\knr,G\e)$ and define
\begin{equation}\label{h1-nr}
H^{1}_{\nr}\lbe(K_{v},G\e)=\krn\!\!\left[H^{1}\lbe(K_{v},G\e)\to H^{ 1}\lbe(\knr,G\e)\right].
\end{equation}
By \cite[p.~422]{sh}, the inflation map $H^{1}\lbe({\rm Gal}(\knr\be/\be K_{v}\lbe),G\lbe(\be\knr))\to H^{1}\lbe(K_{v},G\e)$ induces an isomorphism of abelian groups
\begin{equation}\label{h2-nr}
H^{1}\lbe({\rm Gal}(\knr\be/\be K_{v}\lbe),G\lbe(\be\knr))\isoto H^{1}_{\nr}\lbe(K_{v},G\e).
\end{equation}

Next we define
\begin{eqnarray}
\sha^{1}_{\le\nr}\lbe(S,G\e)&=&\krn\!\!\left[\e H^{1}\lbe(K,G\e)\to
\prod_{v\notin S}H^{1}\lbe(\knr,G\e)\times \prod_{v\in S}H^{1}\lbe(K_{v},G\e)\right], \label{sha-nr}\\
\sha_{S}^{1}(G\e)&=&\krn\!\!\left[\le H^{1}\lbe(K,G\e)\to
\prod_{v\in S}H^{ 1}\lbe(K_{v},G\e)\right] \label{sha-s}
\end{eqnarray}
and write
\begin{equation}\label{ls}
\lambda_{\le\lle G, \e S}\colon \sha_{S}^{1}(G\e)\to \prod_{v\notin S}H^{1}\lbe(\lbe K_{v},G\e)
\end{equation}
for the map whose $v$-component, for $v\notin S$, is the restriction to $\sha_{S}^{1}(G\e)\subset H^1(K,G\e)$ of the restriction map $H^{1}\lbe(K,G\e)\to H^{1}\lbe(K_{v},G\e)$. Clearly $\sha^{1}_{\le\nr}\lbe(S,G\e)\subset \sha_{S}^{1}(G\e)$ and $\krn \lambda_{\le\lle G, \e S}=\sha^{1}\be(G\e)$ is the Tate-Shafarevich group of $G$. Now recall the group
\begin{equation}\label{c1p}
C^{\le 1}_{G,\e S}=\left(\e\prod_{v\notin S}H^{1}_{\nr}\lbe(K_{v},G\e)\lbe\right)\cap\img \lambda_{\e G, \e S}\subset \prod_{v\notin S}H^{1}\lbe(K_{v},G\le)
\end{equation}
introduced in \cite[p.~80]{ga12}. Since, for every $v\in S$, the restriction map $H^{1}\lbe(K,G\e)\to
H^{1}\lbe(\knr,G\e)$ factors as $H^{1}\lbe(K,G\e)\to
H^{1}\lbe(K_{v},G\e)\to H^{1}\lbe(\knr,G\e)$, a class $\xi\in \sha_{S}^{1}(G\e)$ lies in 
$\sha^{1}_{\le\nr}\lbe(S,G\e)$ if, and only if, $\xi_{v}\in H^{1}_{\nr}\lbe(K_{v},G\le)$ for every $v\in S$. Consequently, the restriction of $\lambda_{\le\lle G, \e S}$ to $\sha^{1}_{\nr}(S,G\e)$ defines a surjection
$\sha^{1}_{\le\nr}\lbe(S,G\e)\twoheadrightarrow C^{\le 1}_{G,\e S}$ whose kernel is $\sha^{1}\be(G\e)$. Consequently, the following holds

\begin{lemma} \label{shaq=cp1} The map $\lambda_{\le\lle G, \e S}$ \eqref{ls} induces an isomorphism of abelian groups
\[
\sha^{1}_{\le\nr}\lbe(S,G\e)/\e\sha^{1}\be(G\e)\isoto C^{\le 1}_{G,\e S},
\]
where the groups $\sha^{1}_{\le\nr}\lbe(S,G\e)$ and $C^{\le 1}_{\le G,\e S}$ are given by $\eqref{sha-nr}$ and \eqref{c1p}, respectively.\qed
\end{lemma}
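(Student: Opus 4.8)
The plan is to read the isomorphism off directly from the definition of $\lambda_{\le\lle G,\e S}$; there is nothing deep here, only the observation that $\sha^{1}_{\le\nr}\lbe(S,G\e)$ is exactly the preimage under $\lambda_{\le\lle G,\e S}$ of the subgroup $\prod_{v\notin S}H^{1}_{\nr}\lbe(K_{v},G\e)$ of $\prod_{v\notin S}H^{1}\lbe(K_{v},G\e)$, so that the asserted isomorphism is the one induced by $\lambda_{\le\lle G,\e S}$ via the first isomorphism theorem.

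First I would record two elementary facts. (i) $\sha^{1}_{\le\nr}\lbe(S,G\e)\subset\sha_{S}^{1}(G\e)$, so $\lambda_{\le\lle G,\e S}$ restricts to a homomorphism on $\sha^{1}_{\le\nr}\lbe(S,G\e)$. (ii) For $\xi\in\sha_{S}^{1}(G\e)$, one has $\xi\in\sha^{1}_{\le\nr}\lbe(S,G\e)$ if and only if $\xi_{v}\in H^{1}_{\nr}\lbe(K_{v},G\e)$ for every $v\notin S$: indeed the conditions at $v\in S$ appearing in \eqref{sha-nr} are automatic for elements of $\sha_{S}^{1}(G\e)$ by \eqref{sha-s}, while for $v\notin S$ the restriction $H^{1}\lbe(K,G\e)\to H^{1}\lbe(\knr,G\e)$ factors through $H^{1}\lbe(K_{v},G\e)$, so the vanishing of the image of $\xi$ in $H^{1}\lbe(\knr,G\e)$ amounts to $\xi_{v}\in H^{1}_{\nr}\lbe(K_{v},G\e)$ by \eqref{h1-nr}. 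Granting (i) and (ii), the definition \eqref{c1p} of $C^{\le 1}_{G,\e S}$ as $\big(\prod_{v\notin S}H^{1}_{\nr}\lbe(K_{v},G\e)\big)\cap\img\lambda_{\le\lle G,\e S}$ shows at once that $\lambda_{\le\lle G,\e S}$ carries $\sha^{1}_{\le\nr}\lbe(S,G\e)$ into $C^{\le 1}_{G,\e S}$ and, conversely, that any element of $C^{\le 1}_{G,\e S}$ is $\lambda_{\le\lle G,\e S}(\xi)$ for some $\xi\in\sha_{S}^{1}(G\e)$ whose coordinates at $v\notin S$ lie in $H^{1}_{\nr}\lbe(K_{v},G\e)$, hence with $\xi\in\sha^{1}_{\le\nr}\lbe(S,G\e)$ by (ii). Thus $\lambda_{\le\lle G,\e S}$ restricts to a surjection $\sha^{1}_{\le\nr}\lbe(S,G\e)\twoheadrightarrow C^{\le 1}_{G,\e S}$.

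It then remains to compute the kernel of this surjection. Since $\krn\lambda_{\le\lle G,\e S}=\sha^{1}\be(G\e)$ and $\sha^{1}\be(G\e)\subset\sha^{1}_{\le\nr}\lbe(S,G\e)$ — vanishing of $\xi_{v}$ in $H^{1}\lbe(K_{v},G\e)$ for all $v$ forces vanishing of the image of $\xi$ in $H^{1}\lbe(\knr,G\e)$ for $v\notin S$ — the kernel of the restriction of $\lambda_{\le\lle G,\e S}$ to $\sha^{1}_{\le\nr}\lbe(S,G\e)$ is precisely $\sha^{1}\be(G\e)$, and the first isomorphism theorem gives $\sha^{1}_{\le\nr}\lbe(S,G\e)/\e\sha^{1}\be(G\e)\isoto C^{\le 1}_{G,\e S}$. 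I do not anticipate a genuine obstacle; the only thing to be careful about is the bookkeeping of which local conditions are imposed at primes inside versus outside $S$ in \eqref{sha-nr}, \eqref{sha-s} and \eqref{c1p}, together with the compatibility of the restriction maps $H^{1}\lbe(K,G\e)\to H^{1}\lbe(K_{v},G\e)\to H^{1}\lbe(\knr,G\e)$ used in (ii).
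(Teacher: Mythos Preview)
Your proposal is correct and follows essentially the same approach as the paper. The paper's argument (given in the paragraph immediately preceding the lemma statement) is precisely that for $\xi\in\sha_{S}^{1}(G\e)$, membership in $\sha^{1}_{\le\nr}\lbe(S,G\e)$ is equivalent to $\xi_{v}\in H^{1}_{\nr}\lbe(K_{v},G\e)$ for every $v\notin S$, whence the restriction of $\lambda_{\le\lle G,\e S}$ surjects onto $C^{\le 1}_{G,\e S}$ with kernel $\sha^{1}\be(G\e)$; your (i), (ii), and the closing kernel computation spell out exactly these same steps.
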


\section{The generalized duality theorem}\label{tres}

Let $A$ be an abelian variety over $K$. By \cite[\S 1.4, Theorem 3, p.~19]{blr} and \cite[proof of Corollary 15.3, p.~486]{gw}, $\Phi_{v}(\be A\le)(k(v))$ is finite and equal to zero for all but finitely many primes $v\notin S$. Thus the group \eqref{ncg} is finite. Further, since $\prod_{v\notin S}\!\Phi_{v}(\be A\le)(k(v))=\bigoplus_{v\notin S}\!\Phi_{v}(\be A\le)(k(v))$, we may rewrite \eqref{ncg} as
\begin{equation}\label{ncgp}
C_{\lbe A\le,\e S}=\cok\!\!\be\left[\lbe A(K)\overset{\!\rho}{\to}\prod_{v\notin S}\Phi_{\lbe v}(\be A\lbe)(k(v))\right]\!.
\end{equation}

\begin{lemma}\label{gd} For every $v\notin S$, there exists a canonical isomorphism of finite abelian groups
\[
\Phi_{v}(\be A\le)(k(v))\isoto H^{1}\lbe({\rm Gal}(\knr\be/\be K_{v}\lbe),A^{t}\be(\be\knr))^{*}.
\]
\end{lemma}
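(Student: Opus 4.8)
The plan is to identify $\Phi_{v}(A)(k(v))$ with a local Galois cohomology group via the Néron model, and then to invoke the local duality theorem for abelian varieties over $\knr$ (or rather over $K_v$, with Tate local duality) in its unramified incarnation. First I would recall that, since $A$ has good reduction over $\cO_v$ relative to its Néron model $\s A$, there is the standard exact sequence of $\mr{Gal}(\knr/K_v)$-modules
\[
0\to \s A^{0}(\cO_{v}^{\nr})\to \s A(\cO_{v}^{\nr})\to \Phi_{v}(A)(\overline{k(v)})\to 0,
\]
together with the identification $\s A(\cO_{v}^{\nr})=A(\knr)$ coming from the Néron mapping property. Taking $\mr{Gal}(\knr/K_{v})$-cohomology and using that $H^{1}$ of the connected (smooth) part $\s A^{0}$ over the strictly Henselian base vanishes by Lang's theorem applied to the special fibre (more precisely, $H^{1}(\mr{Gal}(\knr/K_v),\s A^{0}(\cO_v^{\nr}))=0$ by Hilbert 90--type arguments / Lang), one extracts a short exact sequence relating $\Phi_{v}(A)(k(v))$ to $H^{1}(\mr{Gal}(\knr/K_v),A(\knr))=H^{1}_{\nr}(K_v,A)$ via \eqref{h2-nr}. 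This is essentially the content of Mazur's and Milne's treatment of local duality with good reduction.

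Next I would bring in the dual abelian variety. The Weil pairing together with Tate local duality gives a perfect pairing between $H^{1}(K_v,A)$ and $A^{t}(K_v)$ (viewing $A(K_v)^{\wedge}$ and $H^{1}(K_v,A^{t})$ dually, or the reverse), and a key compatibility — due to Milne, \emph{Arithmetic Duality Theorems}, Ch.~I, and in the function-field case to the corresponding results — is that under this pairing the unramified subgroup $H^{1}_{\nr}(K_v,A)$ is the exact annihilator of the subgroup $A^{t}(\cO_v)=A^{t}(K_v)$ corresponding to points reducing into the connected component, so that $H^{1}_{\nr}(K_v,A)$ is canonically dual to $\Phi_{v}(A^{t})(k(v))$. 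Combined with the previous paragraph (applied with $A$ and $A^{t}$ interchanged), and with \eqref{h2-nr}, this yields
\[
\Phi_{v}(A)(k(v))\isoto H^{1}_{\nr}(K_v,A^{t})^{*}\isoto H^{1}(\mr{Gal}(\knr/K_v),A^{t}(\knr))^{*},
\]
which is the assertion. Finiteness of all groups involved is automatic since $\Phi_{v}(A)(k(v))$ is finite.

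The main obstacle — and the step that requires care rather than cleverness — is pinning down the precise compatibility between the cohomological duality pairing and the component-group description, i.e.\ verifying that the annihilator of the "integral" subgroup on one side is exactly the unramified subgroup on the other, with the correct canonical (not merely abstract) isomorphism. In characteristic $p$ one must also be slightly careful that the relevant cohomology is fppf rather than étale when $A$ has components of order divisible by $p$, though for an abelian variety the component group scheme $\Phi_v(A)$ is étale, so this issue does not actually arise here; nonetheless the vanishing $H^{1}(\mr{Gal}(\knr/K_v),\s A^{0}(\cO_v^{\nr}))=0$ should be justified by reduction to the special fibre and Lang's theorem over the (possibly infinite, but still "good enough") residue field $\overline{k(v)}$, which is where I would spend the most effort. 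Everything else is a diagram chase through the six-term exact sequences and \eqref{h2-nr}.
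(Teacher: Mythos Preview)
Your approach is correct and rests on the same underlying inputs as the paper's, though you package the duality step differently. The paper composes two maps: McCallum's result that Grothendieck's pairing induces a perfect pairing $\Phi_v(A)(k(v))\times H^{1}(k(v),\Phi_v(A^t))\to\Q/\Z$, followed by the isomorphism $\psi_v\colon H^{1}(\mathrm{Gal}(\knr/K_v),A^t(\knr))\isoto H^{1}(k(v),\Phi_v(A^t))$ from Milne's \emph{Arithmetic Duality Theorems} (which is exactly the Lang-theorem computation you sketch in your first paragraph). You instead invoke Tate local duality together with the statement that $H^{1}_{\nr}(K_v,A^t)$ is the exact annihilator of the subgroup of $A(K_v)$ reducing into the identity component---note that you wrote $A^t(\cO_v)=A^t(K_v)$, but you mean $\s A^{0}(\cO_v)$, the points landing in the connected component of the special fibre. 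These two formulations are equivalent, and their equivalence is precisely McCallum's compatibility between Grothendieck's pairing and Tate's local pairing, which the paper cites explicitly in the proof of Lemma~\ref{comm}. The paper's choice to \emph{define} the isomorphism via Grothendieck's pairing pays off there: the commutativity needed for Lemma~\ref{comm} then becomes a direct citation rather than an extra verification. One further slip: your first paragraph says the long exact sequence ``relates $\Phi_v(A)(k(v))$ to $H^{1}_{\nr}(K_v,A)$,'' but these sit in different degrees; what the sequence actually gives you is the surjection $A(K_v)\twoheadrightarrow\Phi_v(A)(k(v))$ (from $H^{1}$ of the connected part vanishing) and, separately, the isomorphism $H^{1}_{\nr}(K_v,A)\cong H^{1}(k(v),\Phi_v(A))$, and only the former feeds into your second paragraph.
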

\begin{proof} By \cite[Theorem 4.3 and its proof]{mcc}, Grothendieck's pairing
\[
\Phi_{v}\lbe(\be A\le)\lbe(\le k(v)^{\rm s})\times \Phi_{v}\lbe(\be A^{t}\le)\lbe(\le k(v)^{\rm s})\to\Q/\Z
\]
induces a perfect pairing of finite groups
\begin{equation}\label{igp}
(-,-)_{v}\colon\Phi_{v}(\lbe A\le)(k(v))\times H^{1}\lbe(k(v),\Phi_{v}(\lbe A^{t}\le)\lle)\to \Q/\Z.
\end{equation}
Now, by \cite[proof of Proposition I.3.8, p.~47]{adt}, the reduction map $A^{t}\lbe(\knr)\to\Phi_{v}\lbe(\be A^{t}\le)\lbe(\le k(v)^{\rm s}\le)$ and the canonical isomorphism ${\rm Gal}(\knr\be/\lbe K_{v})\simeq
{\rm Gal}(k(v)^{\rm s}\be/\lbe k(v))$ induce an isomorphism of finite groups
\begin{equation}\label{psiv}
\psi_{\le v}\colon H^{1}\lbe({\rm Gal}(\knr\be/\lbe K_{v}),A^{t}(\lbe\knr))\isoto H^{1}\lbe(k(v),\Phi_{v}(\lbe A^{t}\le)\lle).
\end{equation}
The isomorphism of the lemma is the composition
\[
\Phi_{v}(\lbe A\le)(k(v))\isoto H^{1}\lbe(k(v),\Phi_{v}(\lbe A^{t}\le)\lle)^{*}\underset{\!\psi_{v}^{*}}{\isoto}
H^{1}\lbe({\rm Gal}(\knr\be/\be K_{v}\lbe),A^{t}\lbe(\be\knr))^{*},
\]
where the first isomorphism is induced by the pairing \eqref{igp}.
\end{proof}

\begin{remark}\label{fin} By the lemma applied to $A^{t}$, the comments before it and \eqref{h2-nr}, the isomorphic groups $H^{1}\lbe({\rm Gal}(\knr\be/\be K_{v}\lbe),A(\be\knr))$ and $H^{1}_{\nr}\lbe(K_{v},A\e)$ are finite and equal to zero for all but finitely many primes $v\notin S$.
\end{remark}

Now set
\begin{equation}\label{qsha}
\tha^{1}\be(A\e)=\cok\!\!\left[\le H^{1}\lbe(K,A\e)\to
\bigoplus_{\text{ all }v}H^{ 1}\lbe(K_{v},A\e)\right]
\end{equation}
and let
\begin{equation}\label{ggs}
\beta_{\lbe A,\e S}\colon \bigoplus_{v\notin S}H^{1}_{\nr}\lbe(K_{v},A\le)\to \tha^{1}\be(A\e)
\end{equation}
be the restriction of the canonical projection $\bigoplus_{\e\text{all }v}\be H^{1}\lbe(K_{v},A\e)\twoheadrightarrow \tha^{1}\be(A\e)$ to the finite subgroup $\bigoplus_{\le v\notin S}\be H^{1}_{\nr}\lbe(K_{v},A\le)$ of $\bigoplus_{\e\text{all }v}\be H^{1}\lbe(K_{v},A\e)$ (see Remark \ref{fin}).  An application of Lemma \ref{ker-cok} to the pair of maps
\[
H^{1}\lbe(K,A\e)\to \bigoplus_{\text{all }v}H^{1}\lbe(K_{v},A\e)\to
\bigoplus_{v\notin S}H^{1}\lbe(\knr,A\e)\times \bigoplus_{v\in S}H^{1}\lbe(K_{v},A\e)
\]
yields an exact sequence of abelian groups
\begin{equation}\label{ex}
0\to \sha^{1}\lbe(\lbe A\le)\to \sha^{1}_{\le\nr}\lbe(S,A\le)\to \displaystyle{\bigoplus_{v\notin S}}\e H^{1}_{\nr}\lbe(K_{v},A\e)\overset{\!\beta_{\lbe A,\lle S}}{\lra} \tha^{1}\be(A\e).
\end{equation}
Thus the following holds

\begin{proposition}\label{exca} There exists a canonical isomorphism of finite abelian groups
\[
\sha^{1}_{\le\nr}\lbe(S,A\e)/\e\sha^{1}\lbe(A\e)\isoto\krn\!\!\lbe\left[\,\bigoplus_{v\notin S}H^{1}_{\nr}\lbe(K_{v},A\e)\overset{\!\beta_{\lbe A\lbe,\le S}}{\lra} \tha^{1}\lbe(A\e)\right],
\]
where $\sha^{1}_{\le\nr}\lbe(S,A\e)$ is the group \eqref{sha-nr} and $\beta_{\le A,\le S}$ is the map \eqref{ggs}.
\end{proposition}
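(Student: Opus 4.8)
The plan is to read the statement off the exact sequence \eqref{ex}, which has just been produced as an instance of Lemma \ref{ker-cok}. Write $\iota\colon\sha^{1}_{\le\nr}\lbe(S,A\e)\to\bigoplus_{v\notin S}H^{1}_{\nr}\lbe(K_{v},A\e)$ for the third map in \eqref{ex}. Exactness of \eqref{ex} at $\sha^{1}_{\le\nr}\lbe(S,A\e)$ identifies $\sha^{1}\lbe(\lbe A\le)$ with $\krn\iota$, while exactness at $\bigoplus_{v\notin S}H^{1}_{\nr}\lbe(K_{v},A\e)$ identifies $\img\iota$ with $\krn\beta_{\lbe A,\e S}$. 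Hence $\iota$ factors through an isomorphism $\sha^{1}_{\le\nr}\lbe(S,A\e)/\e\sha^{1}\lbe(\lbe A\le)\isoto\krn\beta_{\lbe A,\e S}$, which is precisely the asserted isomorphism; it is canonical because \eqref{ex} is, being the ker--coker sequence of Lemma \ref{ker-cok} attached to the two explicitly recorded maps.

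For finiteness I would invoke Remark \ref{fin}: the group $\bigoplus_{v\notin S}H^{1}_{\nr}\lbe(K_{v},A\e)$ is finite, hence so is its subgroup $\krn\beta_{\lbe A,\e S}$, and therefore so is the isomorphic group $\sha^{1}_{\le\nr}\lbe(S,A\e)/\e\sha^{1}\lbe(\lbe A\le)$.

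I foresee no genuine obstacle here; the statement is essentially a reformulation of \eqref{ex}. The only point meriting a line of care is the identification of the third arrow of the ker--coker sequence of Lemma \ref{ker-cok} with $\beta_{\lbe A,\e S}$: for $f\colon H^{1}\lbe(K,A\e)\to\bigoplus_{\text{all }v}H^{1}\lbe(K_{v},A\e)$ and $g$ the product of the restriction maps to $\knr$ (for $v\notin S$) and to $K_{v}$ (for $v\in S$), one has $\krn g=\bigoplus_{v\notin S}H^{1}_{\nr}\lbe(K_{v},A\e)$ and $\cok f=\tha^{1}\lbe(A\e)$, and the map $\krn g\to\cok f$ of the Lemma is the composite $\krn g\hookrightarrow\bigoplus_{\text{all }v}H^{1}\lbe(K_{v},A\e)\twoheadrightarrow\tha^{1}\lbe(A\e)$, i.e. the restriction of the canonical projection, which is exactly $\beta_{\lbe A,\e S}$. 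This is the standard description of that connecting map, and in any case it has already been used in deriving \eqref{ex}.
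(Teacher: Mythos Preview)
Your proposal is correct and follows exactly the paper's approach: the proposition is stated immediately after the exact sequence \eqref{ex} with the words ``Thus the following holds,'' so the paper's proof is nothing more than reading the isomorphism off \eqref{ex}, precisely as you do. Your additional remarks on finiteness via Remark \ref{fin} and on the identification of the connecting map with $\beta_{\lbe A,\e S}$ simply make explicit what the paper leaves implicit.
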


Now we let
\begin{equation}\label{ex0}
\cok \beta_{\lbe A\lle,\e S}^{\le *}\isoto (\e\sha^{1}_{\le\nr}\lbe(S,A\e)/\e\sha^{1}\lbe(A\e))^{*},
\end{equation}
be the composition of canonical isomorphisms of finite abelian groups
\[
\cok \beta_{\lbe A\lle,\e S}^{\le *}\underset{\eqref{prof}}\isoto (\e\krn\beta_{\lbe A,S}\e)^{*}\isoto (\e\sha^{1}_{\le\nr}\lbe(S,A\e)/\e\sha^{1}\lbe(A\e))^{*},
\]
where the second map is the dual of the isomorphism in Proposition \ref{exca}\,.
Next let
\begin{equation}\label{ggs2}
\widetilde{\beta}_{\lbe A\lle,\e S}\colon \bigoplus_{v\notin S}H^{1}\lbe({\rm Gal}(\knr\be/\be K_{v}\lbe),A\lbe(\be\knr))\to \tha^{1}\be(A\e)
\end{equation}
be the composition
\[
\bigoplus_{v\notin S}H^{1}\lbe({\rm Gal}(\knr\be/\be K_{v}\lbe),A\lbe(\be\knr))\isoto\bigoplus_{v\notin S}H^{1}_{\nr}\lbe(K_{v},A\le)\overset{\!\beta_{\lbe A,S}}{\lra} \tha^{1}\be(A\e),
\]
where the first map is the direct sum over $v\notin S$ of the isomorphisms \eqref{h2-nr} and $\beta_{\lbe A,S}$ is the map \eqref{ggs}. The dual of \eqref{ggs2} is a map
\begin{equation}\label{ggs3}
\widetilde{\beta}_{\lbe A,\e S}^{\e *}\colon \tha^{1}\lbe(A\e)^{*}\to\prod_{v\notin S}H^{1}\lbe({\rm Gal}(\knr\be/\be K_{v}\lbe),A\lbe(\be\knr))^{*}.
\end{equation}
We define an isomorphism of finite abelian groups
\begin{equation}\label{bd}
\cok \widetilde{\beta}_{\be A\lle,\le S}^{\le *}\isoto (\e\sha^{1}_{\le\nr}\lbe(S,A\e)/\e\sha^{1}\lbe(A\e))^{*}
\end{equation}
by the commutativity of the diagram
\[
\xymatrix{\cok \beta_{\lbe A\lle,\e S}^{\le *}\ar[d]_(.45){\sim}\ar[r]^(.35){\eqref{ex0}}_(.35){\sim}&(\e\sha^{1}_{\le\nr}\lbe(S,A\e)/\e\sha^{1}\lbe(A\e))^{*}\\
\cok \widetilde{\beta}_{\lbe A\lle,\e S}^{\le *}\ar[ur],&
}
\]
where the vertical map is induced by the product over $v\notin S$ of the duals of the isomorphisms \eqref{h2-nr}. The isomorphism \eqref{bd} will play an auxiliary role in the proof of Theorem \ref{uno}\,.

\medskip

For every positive integer $n$, the exact sequence of fppf sheaves on $\spec K$
\begin{equation}\label{nse}
0\to A_{\e n}\to A\overset{\!n}{\to}A\to 0
\end{equation}
induces an exact sequence of abelian groups
\begin{equation}\label{sse}
0\to A(K)/n\to H^{1}(K, A_{\e n})\to H^{1}(K, A\lle)_{\le n}\to 0.
\end{equation}
Now an application of Lemma \ref{ker-cok} to the pair of maps
\[
H^{1}(K, A_{\e n})\twoheadrightarrow H^{1}(K, A\lle)_{\le n}\to \prod_{\e\text{all }v}H^{1}(K_{v}, A\lle),
\]
using the exactness of \eqref{sse}, yields an exact sequence of projective systems of abelian groups with canonical transition maps
\begin{equation}\label{ssf}
0\to (A(K)/n)\to ({\rm Sel}_{\le n}(\lbe A\le))\to (\e\sha^{1}\be(\lbe A\le)_{n})\to 0,
\end{equation}
where 
\begin{equation}\label{tsel1}
{\rm Sel}_{\le n}(\lbe A\le)=\krn\!\!\left[ H^{1}(K,A_{\le n})\to\prod_{\text{all } v}H^{1}(K_{v},A\lle)\right]
\end{equation}
is the $n$-th Selmer group of $A$. Since $(A(K)/n)$ has surjective transition maps, an application of \cite[Proposition 10.2, p.~102]{am} to \eqref{ssf} using \eqref{tb} yields an exact sequence of profinite abelian groups 
\begin{equation}\label{seq0}
0\to A\lbe (K\le)^{\wedge}\to T\e{\rm Sel}(\lbe A)\to T\e\sha^{1}\be(\lbe A\le)_{\rm div}\to 0,
\end{equation}
where
\begin{equation}\label{at}
A\lbe (K\le)^{\wedge}=\varprojlim_{n} A\lbe(K)/n
\end{equation}
is the adic (equivalently, profinite \cite[\S3]{se}, \cite{mic}) completion of $A\lbe (K\le)$ and 
\begin{equation}\label{tsel2}
 T\e{\rm Sel}(\lbe A\le)=\varprojlim_{n}{\rm Sel}_{\le n}(\lbe A\le)
\end{equation}
is the pro-Selmer group of $A$. The first nontrivial map in the sequence \eqref{seq0} is the projective limit over $n$ of the canonical maps $A(K)/n\to {\rm Sel}_{\le n}(\lbe A\le)$ induced by the connecting maps in fppf cohomology $A(K)\to H^{1}(K, A_{\e n})$ associated to the sequence \eqref{nse}. We will identify $A\lbe (K\le)^{\wedge}$ with its image in $T\e{\rm Sel}(\lbe A)$ under the indicated map. Thus \eqref{seq0} induces an isomorphism of profinite abelian groups
\begin{equation}\label{iso1}
T\e\sha^{1}\be(\lbe A\le)_{\rm div}\isoto T\e{\rm Sel}(\lbe A)/A\lbe (K\le)^{\wedge}.
\end{equation}

For every archimedean prime $v$ of $K$, let $H^{\le 0}\lbe(K_{\lbe v}, A\le)$ be the group of connected components of $A(K_{\lbe v})$. Note that, since the identity component of $A(K_{v})$ is a divisible abelian group \cite[Remark 3.7, p.~46]{adt}, we have $H^{\le 0}\lbe(K_{\lbe v}, A\le)/n=A(K_{v})/n$ for every positive integer $n$.

An application of Lemma \ref{ker-cok} to the pair of maps
\[
H^{1}(K, A_{\e n})\to\prod_{\e\text{all }v}H^{1}(K_{v}, A_{\e n}\lle)\to \prod_{\e\text{all }v}H^{1}(K_{v}, A\lle)
\]
yields a canonical (localization) map ${\rm Sel}_{\le n}(\lbe A\le)\to \prod_{\e\text{all }v}\be\krn\be[H^{1}(K_{v}, A_{\e n}\lle)\be\to\! H^{1}(K_{v}, A\lle)]$. Now we consider 
the composition
\begin{equation}\label{new}
{\rm Sel}_{\le n}(\lbe A\le)\to \prod_{\e\text{all }v}\krn\be[H^{1}(K_{v}, A_{\e n}\lle)\be\to\! H^{1}(K_{v}, A\lle)]\isoto \prod_{\e\text{all }v}\be H^{\le 0}\lbe(K_{\lbe v}, A\le)/n\,,
\end{equation}
where the $v$-component of the second map is the inverse of the isomorphism
\[
H^{\le 0}\lbe(K_{\lbe v}, A\le)/n=A(K_{v})/n\isoto \krn\be[H^{1}(K_{v}, A_{\e n}\lle)\to H^{1}(K_{v}, A\lle)]
\]
induced by the connecting map in fppf cohomology $A(K_{v})\to H^{1}(K_{v}, A_{\e n})$ associated to the sequence \eqref{nse} over $\spec K_{v}$. By \cite[Remark 3.4]{gat12}, the projective limit over $n$ of the maps \eqref{new} is an {\it injection}
\begin{equation}\label{net}
T\e{\rm Sel}(\lbe A)\hookrightarrow \prod_{\e\text{all }v}\be H^{\le 0}\lbe(K_{\lbe v}, A\le)^{\wedge}=\prod_{\e\text{all }v}\be H^{\le 0}\lbe(K_{\lbe v}, A\le),
\end{equation}
where the equality follows from the fact that $H^{\le 0}(K_{\lbe v}, A\le)$ is profinite for every prime $v$ of $K$. Now we consider the composition
\begin{equation}\label{nes}
T\e{\rm Sel}(\lbe A)\hookrightarrow \prod_{\e\text{all }v}\be H^{\le 0}\lbe(K_{\lbe v}, A\le)\isoto \prod_{\e\text{all }v}\be H^{\le 1}(K_{\lbe v}, A^{t}\le)^{*},
\end{equation}
where the first arrow is the map \eqref{net} and the $v$-component of the second arrow is the Tate local duality isomorphism $H^{\le 0}\lbe(K_{\lbe v}, A\le)\isoto H^{\le 1}(K_{\lbe v}, A^{t}\le)^{*}$. It is shown in \cite{gat07} that \eqref{nes} identifies $T\e{\rm Sel}(\lbe A)$ with the kernel of the canonical map $\prod_{\e\text{all}\e v}\be H^{\le 1}(K_{\lbe v}, A^{t}\le)^{*}\to H^{\le 1}(K, A^{t}\le)^{*}$ which, via an application of \eqref{ps}, can be identified with the dual of the canonical localization map $H^{\le 1}(K, A^{t}\le)\to\bigoplus_{\e\text{all}\e v}\be H^{\le 1}(K_{\lbe v}, A^{t}\le)$. Thus, if $\tha^{1}\be(\lbe A^{t}\le)$ is the group \eqref{qsha} associated to $A^{t}$, then there exists a canonical isomorphism of profinite abelian groups
\begin{equation}\label{nes2}
T\e{\rm Sel}(\lbe A)\isoto \tha^{1}\be(\lbe A^{t}\le)^{*},
\end{equation}
namely the composition
\[
T\e{\rm Sel}(\lbe A)\underset{\eqref{nes}}\isoto \krn\!\!\left[\,\prod_{\e\text{all}\e v}\be H^{\le 1}(K_{\lbe v}, A^{t}\le)^{*}\to H^{\le 1}(K, A^{t}\le)^{*}\right]\underset{\eqref{dis}}\isoto \tha^{1}\be(\lbe A^{t}\le)^{*}.
\]
Now let
\begin{equation}\label{mps1}
\widetilde{\gamma}_{\lbe A,\e S}\colon T\e{\rm Sel}(\be A\le)\to\prod_{v\notin S}H^{1}\lbe({\rm Gal}(\knr\be/\be K_{v}\lbe),A^{t}\lbe(\be\knr))^{*}
\end{equation}
be the composition
\[
T\e{\rm Sel}(\be A\le)\underset{\eqref{nes2}}\isoto\tha^{1}\be(\lbe A^{t}\le)^{*}\overset{\!\!\widetilde{\beta}_{\!\be A^{\lbe t}\be,\le\lle S}^{\le*}}{\lra}\prod_{v\notin S}H^{1}\lbe({\rm Gal}(\knr\be/\be K_{v}\lbe),A^{t}\lbe(\be\knr))^{*},
\]
where $\widetilde{\beta}_{\! A^{\lbe t}\be,\le\lle S}^{\le*}$ is the map \eqref{ggs3} associated to $A^{\lbe t}$. Then there exists a canonical isomorphism of finite abelian groups
\begin{equation}\label{can}
\cok \widetilde{\gamma}_{\lbe A,\e S}\isoto(\e\sha^{1}_{\le\nr}\lbe(S,A^{\lbe t}\e)/\e\sha^{1}\lbe(\lbe A^{t}))^{*},
\end{equation}
namely the composition
\[
\cok \widetilde{\gamma}_{\lbe A,\e S}\isoto \cok\widetilde{\beta}_{\! A^{\lbe t}\be,\le\lle S}^{\le*}\isoto (\e\sha^{1}_{\le\nr}\lbe(S,A^{\lbe t}\e)/\e\sha^{1}\lbe(\lbe A^{t}))^{*},
\]
where the first map is induced by the identity map on 
$\prod_{v\notin S}H^{1}\lbe({\rm Gal}(\knr\be/\be K_{v}\lbe),A^{t}\lbe(\be\knr))^{*}$ and the second map is the isomorphism \eqref{bd} associated to $A^{\lbe t}$.

\smallskip

Now let
\begin{equation}\label{ggsd}
\gamma_{\lbe A,\e S}\colon T\e{\rm Sel}(\be A\le)\to\prod_{v\notin S}\Phi_{v}(\be A\le)(k(v))
\end{equation}
be defined by the commutativity of the diagram
\begin{equation}\label{dio}
\xymatrix{&&\displaystyle{\prod_{v\notin S}}\,\Phi_{v}(\be A\le)(k(v))\ar[d]^(.55){\sim}\\
T\e{\rm Sel}(\be A\le)\ar[urr]^(.4){\gamma_{\lbe A,\e S}}\ar[rr]^(.35){\widetilde{\gamma}_{\lbe A,S}}&&\displaystyle{\prod_{v\notin S}}H^{1}\lbe({\rm Gal}(\knr\be/\be K_{v}\lbe),A^{t}\lbe(\be\knr))^{*},
}
\end{equation}
where $\widetilde{\gamma}_{\lbe A,\le S}$ is the map \eqref{mps1} and the $v$-component of the vertical arrow is the isomorphism of Lemma {\rm \ref{gd}}\,. Then there exists a canonical isomorphism of finite abelian groups
\begin{equation}\label{can2}
\cok\gamma_{\lbe A,\e S}\isoto(\e\sha^{1}_{\le\nr}\lbe(S,A^{\lbe t}\e)/\e\sha^{1}\lbe(\lbe A^{t}))^{*},
\end{equation}
namely the composition
\[
\cok\gamma_{\lbe A,\e S}\isoto\cok \widetilde{\gamma}_{\lbe A,\e S}\underset{\eqref{can}}{\isoto} (\e\sha^{1}_{\le\nr}\lbe(S,A^{\lbe t}\e)/\e\sha^{1}\lbe(\lbe A^{t}))^{*},
\]
where the first map is induced by the vertical map in \eqref{dio}.

\begin{lemma}\label{comm} The following diagram commutes 
\[
\xymatrix{A\lbe (K\le)^{\wedge}\ar@{^{(}->}[d]\ar[rr]^(.35){\widehat{\rho}}&&\displaystyle{\prod_{v\notin S}}\,\Phi_{v}(\be A\le)(k(v))\,,\\
T\e{\rm Sel}(\be A\le)\ar[urr]_(.4){\gamma_{\lbe A,\e S}}&&
}
\]
where the vertical map is the first nontrivial map in the sequence \eqref{seq0}, 
$\gamma_{\lbe A,\e S}$ is the map \eqref{ggsd} and the horizontal map is the completion of the reduction map $\rho\colon A\lbe (K\le)\to \prod_{v\notin S}\!\Phi_{v}(\be A\le)(k(v))$.
\end{lemma}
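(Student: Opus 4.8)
The plan is to compute the two composites explicitly and reduce the assertion to the standard compatibility between Tate local duality and Grothendieck's pairing on the component groups $\Phi_{v}$.

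\textbf{Reduction to $A(K)$, componentwise.} Since $\prod_{v\notin S}\Phi_{v}(\be A\le)(k(v))$ is finite, both composites in the diagram are continuous maps from $A\lbe(K)^{\wedge}$ into a discrete group. As $A(K)$ is dense in $A\lbe(K)^{\wedge}=\varprojlim_{n}A(K)/n$ (each transition map being surjective), it suffices to show that $\gamma_{\lbe A,\e S}(\iota(x))=\rho(x)$ for every $x\in A(K)$, where $\iota\colon A\lbe(K)^{\wedge}\hookrightarrow T\e{\rm Sel}(\be A\le)$ is the first nontrivial map of \eqref{seq0} and $\rho$ is the reduction map of \eqref{ncg}. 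By the defining diagram \eqref{dio} of $\gamma_{\lbe A,\e S}$ and the injectivity of the isomorphism of Lemma \ref{gd}, this amounts to checking, for each $v\notin S$ separately, that the $v$-component of $\widetilde{\gamma}_{\lbe A,\e S}(\iota(x))$ agrees with the image of $\rho_{v}(x)$ under the isomorphism of Lemma \ref{gd}, where $\rho_{v}\colon A(K)\to\Phi_{v}(\be A\le)(k(v))$ denotes reduction at $v$.

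\textbf{Unwinding $\widetilde{\gamma}_{\lbe A,\e S}(\iota(x))$.} Recall that $\iota$ is the limit over $n$ of the connecting maps $A(K)/n\to{\rm Sel}_{\le n}(\lbe A\le)\subset H^{1}(K,A_{\le n})$ attached to \eqref{nse}. Localizing at a prime $v$, the image of $x$ under ${\rm Sel}_{\le n}(\lbe A\le)\to\krn[H^{1}(K_{v},A_{\le n})\to H^{1}(K_{v},A\lle)]$ is the image of $x_{v}\in A(K_{v})$ under the connecting map $A(K_{v})\to H^{1}(K_{v},A_{\le n})$ of \eqref{nse}, which corresponds under the isomorphism of \eqref{new} to $x_{v}\bmod n$; passing to the limit, \eqref{net} sends $\iota(x)$ to the family $(x_{v})_{v}\in\prod_{v}H^{\le 0}\lbe(K_{v},A\le)$ of local images of $x$. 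Composing with Tate local duality as in \eqref{nes} and then with \eqref{nes2}, $\iota(x)$ is carried to the functional $\phi_{x}\in\tha^{1}\be(\lbe A^{t}\le)^{*}$ represented by $[(\xi_{v})_{v}]\mapsto\sum_{v}\langle x_{v},\xi_{v}\rangle_{v}$, where $\langle-,-\rangle_{v}$ is the Tate local pairing $H^{\le 0}\lbe(K_{v},A\le)\times H^{1}\lbe(K_{v},A^{t}\le)\to\Q/\Z$; that this functional is well defined on $\tha^{1}\be(\lbe A^{t}\le)$ is part of the identification \eqref{nes2} taken from \cite{gat07}. Since $\widetilde{\beta}_{A^{\lbe t},\e S}$ carries $\eta_{v}\in H^{1}\lbe({\rm Gal}(\knr\be/\be K_{v}\lbe),A^{t}\lbe(\be\knr))$, placed in the $v$-th summand of its source, to the class in $\tha^{1}\be(\lbe A^{t}\le)$ of the family supported at $v$ with value the inflation ${\rm inf}(\eta_{v})\in H^{1}_{\nr}\lbe(K_{v},A^{t}\le)$ of \eqref{h2-nr}, the definition \eqref{mps1} of $\widetilde{\gamma}_{\lbe A,\e S}$ shows that the $v$-component of $\widetilde{\gamma}_{\lbe A,\e S}(\iota(x))=\widetilde{\beta}_{A^{\lbe t},\e S}^{\le*}(\phi_{x})$ is the functional $\eta_{v}\mapsto\langle x_{v},{\rm inf}(\eta_{v})\rangle_{v}$ on $H^{1}\lbe({\rm Gal}(\knr\be/\be K_{v}\lbe),A^{t}\lbe(\be\knr))$.

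\textbf{The key compatibility.} By the proof of Lemma \ref{gd}, the image of $\rho_{v}(x)$ in $H^{1}\lbe({\rm Gal}(\knr\be/\be K_{v}\lbe),A^{t}\lbe(\be\knr))^{*}$ is the functional $\eta_{v}\mapsto(\rho_{v}(x),\psi_{v}(\eta_{v}))_{v}$, where $(-,-)_{v}$ is Grothendieck's pairing \eqref{igp} and $\psi_{v}$ is the isomorphism \eqref{psiv} built from the reduction map $A^{t}(\knr)\to\Phi_{v}(\be A^{t}\le)(k(v)^{\rm s})$. Thus everything reduces to the identity $\langle x_{v},{\rm inf}(\eta_{v})\rangle_{v}=(\rho_{v}(x),\psi_{v}(\eta_{v}))_{v}$, which holds because $x_{v}\in A(K_{v})=\s A(\cO_{\lbe v})$ reduces to $\rho_{v}(x)$ and ${\rm inf}(\eta_{v})$ is an unramified class whose associated class under the reduction maps used to define $\psi_{v}$ is exactly $\psi_{v}(\eta_{v})$: this is the well-known compatibility of Tate local duality with Grothendieck's pairing on component groups (see \cite{mcc}, and compare \cite[proof of Proposition I.3.8]{adt}). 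I expect this last identity — more precisely, matching the normalizations of the two pairings as they are set up here — to be the only genuine difficulty; the remainder is the bookkeeping indicated above.
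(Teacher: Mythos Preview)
Your proof is correct and follows essentially the same route as the paper: both reduce the commutativity of the triangle to the outer square involving $\widetilde{\gamma}_{\lbe A,\e S}$ and then to McCallum's compatibility formula $(\rho_{v}(P),\psi_{v}(\xi))_{v}=\langle P,{\rm inf}_{v}(\xi)\rangle_{v}$ between Grothendieck's pairing and Tate local duality. Your version is more explicit in two respects --- the density reduction from $A(K)^{\wedge}$ to $A(K)$ and the step-by-step tracking of $\iota(x)$ through \eqref{net}, \eqref{nes}, \eqref{nes2} and $\widetilde{\beta}_{A^{t}\!,\e S}^{\le *}$ --- but these are exactly the ``definitions of the indicated maps'' that the paper invokes without unpacking.
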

\begin{proof} It suffices to check the commutativity of the outer contour of the diagram
\[
\xymatrix{A\lbe (K\le)^{\wedge}\ar@{^{(}->}[d]\ar[rr]^(.35){\widehat{\rho}}&&\displaystyle{\prod_{v\notin S}}\,\Phi_{v}(\be A\le)(k(v))\ar[d]^{\sim}\\
T\e{\rm Sel}(\be A\le)\ar[urr]^(.4){\gamma_{\lbe A,\e S}}\ar[rr]^(.35){\widetilde{\gamma}_{\lbe A,S}}&&\displaystyle{\prod_{v\notin S}}H^{1}\lbe({\rm Gal}(\knr\be/\be K_{v}\lbe),A^{t}\lbe(\be\knr))^{*},
}
\]
where $\widetilde{\gamma}_{\lbe A,\le S}$ is the map \eqref{mps1} and the $v$-component of the right-hand vertical map is the isomorphism of Lemma {\rm \ref{gd}}\,. The claimed commutativity follows from the definitions of the indicated maps using the known compatibility \cite[p.~1112]{mcc} of the pairing \eqref{igp} (induced by Grothendieck's pairing) with Tate's local duality pairing $\langle -,- \rangle_{v}\colon A(K_{v})\times H^{1}\lbe(K_{v},A^{t}\le)\to \Q/\Z$ for $v\notin S$, i.e., the formula
\[
(\e\rho_{\le v }(\lbe P\le), \psi_{\le v}(\xi))_{v}=\langle P,{\rm inf}_{v}(\xi)\rangle_{v}
\]
for $P\in A(K_{v})$ and $\xi\in H^{1}\lbe({\rm Gal}(\knr\be/\be K_{v}\lbe),A^{t}\lbe(\be\knr))$, where $\rho_{\le v }\colon A(K_{v})\to \Phi_{v}(\be A\le)(k(v))$ is the reduction map, $\psi_{\le v}$ is the map \eqref{psiv} and ${\rm inf}_{v}\colon H^{1}\lbe({\rm Gal}(\knr\be/\be K_{v}\lbe),A^{t}\lbe(\be\knr))\to H^{1}\lbe(K_{v},A^{t}\le)$ is the inflation map.
\end{proof}

\smallskip

{\it We may now prove Theorem} \ref{uno}.

\medskip

By Lemma \ref{comm}\,, the left-hand square below commutes
\begin{equation}\label{gdi}
\xymatrix{A\lbe (K\le)^{\wedge}\ar@{^{(}->}[r]\ar@{=}[d]&T\e{\rm Sel}(\be A\le)
\ar[r]\ar[d]^{\gamma_{\lbe A,\le S}}& T\e{\rm Sel}(\be A\le)/A\lbe (K\le)^{\wedge}\ar@{-->}[d]^{\overline{\gamma}_{\be\le A,\le S}}\ar[r]& 0\\
A\lbe (K\le)^{\wedge}\ar[r]^(.35){\widehat{\rho}}&\displaystyle{\prod_{v\notin S}}\Phi_{v}(\be A\le)(k(v))
\ar[r]^(.55){\pi}&\cok\e\widehat{\rho}\ar[r]&0,
}
\end{equation}
which establishes the existence of a unique map $\overline{\gamma}_{\be\le  A,\le S}\colon T\e{\rm Sel}(\be A\le)/A\lbe (K\le)^{\wedge}\to \cok\e\widehat{\rho}$ such that the full diagram \eqref{gdi} commutes. Note that the canonical projection map $\pi$ in \eqref{gdi} induces an isomorphism of finite abelian groups
\begin{equation}\label{gdi2}
\cok\gamma_{\lbe  A,\le S}\isoto \cok\e\overline{\gamma}_{\lbe  A,\le S}.
\end{equation}
Now, by \eqref{ncgp} and \cite[Proposition 3.2.5, p.~87]{rl}, there exists a canonical isomorphism of finite abelian groups $C_{\be A\le,\e S}\isoto\cok\e\widehat{\rho}$\,. Let
\begin{equation}\label{fip}
\overline{\gamma}_{\be\le  A,S}^{\e\prime}\colon T\e{\rm Sel}(\be A\le)/A\lbe (K\le)^{\wedge}\to C_{\be A\le,\e S}
\end{equation}
be defined by the commutativity of the diagram
\begin{equation}\label{fip2}
\xymatrix{T\e{\rm Sel}(\be A\le)/A\lbe (K\le)^{\wedge}\ar[dr]_{\overline{\gamma}_{\be\le  A,S}}\ar[r]^(.65){\overline{\gamma}_{\be\le  A,S}^{\e\prime}}&C_{\be A\le,\e S}\ar[d]^{\sim}\\
&\cok\e\widehat{\rho}\,.
}
\end{equation}
Then the vertical map in \eqref{fip2} induces an isomorphism of finite abelian groups
\begin{equation}\label{fip3}
\cok \overline{\gamma}_{\be\le  A,S}^{\e\prime}\isoto \cok \overline{\gamma}_{\be\le  A,S}\e.
\end{equation}
Next let
\begin{equation}\label{fi}
\varphi_{\be\le  A,\e S}\colon \e T\e\sha^{1}\be(\lbe A\le)_{\rm div}\to C_{\be A\le,\le S}
\end{equation}
be the composition
\[
T\e\sha^{1}\be(\lbe A\le)_{\rm div}\underset{\eqref{iso1}}\isoto T\e{\rm Sel}(\lbe A)/A\lbe (K\le)^{\wedge}\overset{\!\overline{\gamma}_{\be\le  A,S}^{\e\prime}}{\lra}
C_{\be A\le,\e S},
\]
where $\overline{\gamma}_{\be\le  A,S}^{\e\prime}$ is the map \eqref{fip}. Then there exists a canonical isomorphism of 
finite abelian groups
\begin{equation}\label{fi2}
\cok\varphi_{\be\le  A,\e S}\isoto \cok \overline{\gamma}_{\be\le  A,S},
\end{equation}
namely the composition
\[
\cok\varphi_{\be\le  A,\e S}\isoto \cok \overline{\gamma}_{\be\le  A,S}^{\e\prime}\underset{\eqref{fip3}}\isoto \cok \overline{\gamma}_{\be\le  A,S}\,,
\]
where the first map is induced by the identity map on $C_{\be A\le,\e S}$. Finally, we define a map
\begin{equation}\label{fi3}
\cok\varphi_{\be\le  A,\le S}\isoto
\cok \gamma_{\lbe A,\le S}
\end{equation}
by the commutativity of the diagram
\[
\xymatrix{\cok\varphi_{\lbe  A,\le S}\ar[dr]_(.4){\eqref{fi2}}^{\sim}\ar[r]^(.5){\sim}&\cok \gamma_{\lbe  A,\le S}\ar[d]_(.45){\sim}^(.45){\eqref{gdi2}}\\
&\cok \overline{\gamma}_{\lbe  A,\le S}\,.
}
\]
Then the composition
\[
\cok\varphi_{\be\le  A,\le S}\underset{\eqref{fi3}}{\isoto}
\cok \gamma_{\lbe A,\le S}\underset{\eqref{can2}}{\isoto}(\e\sha^{1}_{\le\nr}\lbe(S,A^{ t}\e)/\e\sha^{1}\lbe(\lbe A^{t}))^{*}
\] 
is a canonical isomorphism of finite abelian groups. Thus we obtain

\begin{theorem}\label{main1} Let $A$ be an abelian variety over $K$ with dual abelian variety $A^{t}$. Then there exists a canonical perfect pairing of finite abelian groups
\[
C_{\lbe A\le,\e S}/\varphi_{\be\lle A\le,\le S}\lbe(\le T\e\sha^{1}\be(\lbe A\le)_{\rm div}\lbe)\times \sha^{1}_{\le\nr}\lbe(S,A^{t}\le)/\e\sha^{1}\be(\be A^{ t}\le)\to\Q/\Z,
\]
where $C_{\lbe A\le,\e S}$ is the group \eqref{ncg}, $\varphi_{\be\lle A\le,\le S}$ is the map \eqref{fi} and $\sha^{1}_{\le\nr}\lbe(S,A^{t}\le\lle)$ is the group \eqref{sha-nr}.
\end{theorem}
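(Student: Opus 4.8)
The plan is to assemble Theorem \ref{main1} from the pieces already in place, so that essentially no new cohomological work is needed. First I would recall the chain of canonical isomorphisms of finite abelian groups constructed above: by \eqref{fi3} we have $\cok\varphi_{\be\le A,\le S}\isoto\cok\gamma_{\lbe A,\le S}$, and by \eqref{can2} we have $\cok\gamma_{\lbe A,\le S}\isoto(\e\sha^{1}_{\le\nr}\lbe(S,A^{t})/\e\sha^{1}\lbe(A^{t}))^{*}$; composing these yields a canonical isomorphism
\[
\cok\varphi_{\be\le A,\le S}\isoto(\e\sha^{1}_{\le\nr}\lbe(S,A^{t})/\e\sha^{1}\lbe(A^{t}))^{*}.
\]
Since $\varphi_{\be\le A,\le S}\colon T\e\sha^{1}\be(A)_{\rm div}\to C_{\be A\le,\e S}$ is a map of finite abelian groups with $C_{\be A\le,\e S}$ finite (as noted at the start of Section \ref{tres}), its cokernel is precisely $C_{\lbe A\le,\e S}/\varphi_{\be\lle A\le,\le S}(T\e\sha^{1}\be(A)_{\rm div})$. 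Thus the displayed isomorphism reads
\[
C_{\lbe A\le,\e S}/\varphi_{\be\lle A\le,\le S}(T\e\sha^{1}\be(A)_{\rm div})\isoto(\e\sha^{1}_{\le\nr}\lbe(S,A^{t})/\e\sha^{1}\be(A^{t}))^{*}.
\]

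Next I would convert this isomorphism into the asserted perfect pairing. Both groups in question are finite: the left-hand side is a quotient of the finite group $C_{\lbe A\le,\e S}$, and the right-hand side is the Pontryagin dual of $\sha^{1}_{\le\nr}\lbe(S,A^{t})/\e\sha^{1}\be(A^{t})$, which is finite by Lemma \ref{shaq=cp1} (it is isomorphic to $C^{1}_{A^{t}\!,\e S}$, which was shown to be finite). For finite abelian groups $B$ and $C$, a group isomorphism $B\isoto C^{*}=\Hom(C,\Q/\Z)$ is the same datum as a perfect bilinear pairing $B\times C\to\Q/\Z$, via $(b,c)\mapsto(\text{image of }b\text{ in }C^{*})(c)$; perfectness on the $B$-side is the injectivity of $B\to C^{*}$ and on the $C$-side follows since $|B|=|C^{*}|=|C|$. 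Applying this with $B=C_{\lbe A\le,\e S}/\varphi_{\be\lle A\le,\le S}(T\e\sha^{1}\be(A)_{\rm div})$ and $C=\sha^{1}_{\le\nr}\lbe(S,A^{t})/\e\sha^{1}\be(A^{t})$ produces exactly the canonical perfect pairing in the statement. Canonicity is inherited from the canonicity of each isomorphism in the chain \eqref{fi3}, \eqref{can2}, which in turn rests on Lemma \ref{comm} (compatibility of Grothendieck's pairing with Tate local duality) and the generalized Cassels–Tate dual exact sequence of \cite{gat07} embodied in \eqref{nes2}.

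There is essentially no remaining obstacle: the entire content of the theorem has been distilled into the construction of the isomorphism \eqref{fi3} composed with \eqref{can2}, and the passage to a pairing is the standard duality dictionary for finite abelian groups. If I were to single out the one point deserving care, it is verifying that the composite isomorphism is genuinely canonical and independent of the auxiliary choices made along the way — in particular the fixed primes $\overline v$ of $\ksep$ and the identifications \eqref{h2-nr}, \eqref{bd} — but this follows because every intermediate map was defined by a commutative diagram forcing uniqueness, and the final composite does not reference those choices. Hence the proof of Theorem \ref{main1} is complete.
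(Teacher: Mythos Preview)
Your proposal is correct and follows essentially the same approach as the paper: the paper's proof is precisely the displayed composition $\cok\varphi_{A,S}\isoto\cok\gamma_{A,S}\isoto(\sha^{1}_{\nr}(S,A^{t})/\sha^{1}(A^{t}))^{*}$ via \eqref{fi3} and \eqref{can2}, followed by the standard passage from an isomorphism onto a Pontryagin dual to a perfect pairing. One small slip: $T\sha^{1}(A)_{\rm div}$ is profinite but not in general finite, so $\varphi_{A,S}$ is not ``a map of finite abelian groups''; however, since only the target $C_{A,S}$ needs to be finite for the cokernel to equal $C_{A,S}/\varphi_{A,S}(T\sha^{1}(A)_{\rm div})$ and to be finite, this does not affect your argument.
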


\section{Jacobian varieties}
Recall from the Introducion the curve $X$ over $K$. Let 
\[  
P=\pic_{X\lbe/\lbe K}
\]
be the Picard scheme of $\e X\e$ over $K$. Then $P$ is a smooth and commutative $K$-group scheme whose identity component
\[
J=\pic_{X\be/\lbe K}^{0}
\]
is an abelian variety over $K$ \cite[Proposition 3, p.~244]{blr}, called the {\it Jacobian variety of $X$}. There exists a canonical exact sequence of abelian sheaves for the \'etale topology on $K$
\begin{equation}\label{seq}
0\to J\to P\overset{\!\!\rm deg}{\to}\Z_{K}\to 0.
\end{equation}
Now recall that the {\it index} (respectively, {\it period}\,) of $X$ over $K$ is the least positive degree $\delta$ (respectively, $\delta^{\le\prime}$\e) of a divisor on $X$ (respectively,  divisor class in $(\le{\rm Pic}\e X_{K^{\rm s}}\be)^{{\rm Gal}(K^{\le\rm s}\be/K)}$). If $L$ is a field containing $K$, then the index (respectively, period) of $X_{L}=X\times_{K}L$ divides the index (respectively, period) of $X$. Further, $\delta^{\le\prime}\le|\e\delta$. See, e.g., \cite{li}. The \'etale cohomology sequence associated to \eqref{seq} induces an exact sequence of abelian groups
\begin{equation}\label{seq2}
0\to\Z/\delta^{\le\prime}\lle\Z\to H^{1}\lbe(K,J\e)\to H^{1}\lbe(K,P\le)\to 0.
\end{equation}
For every prime $v$ of $K$, we will write $\delta_{v}$ for the index of $X_{\be K_{\lbe v}}$. By \cite[Remark 1.6, p.~249]{la}, $\delta_{v}=1$ for all but finitely many primes $v$ of $K$. If $S$ is a finite set of primes of $K$ containing the archimedean primes and $v\notin S$, $\delta_{v}^{\le\rm nr}$ will denote the index of $X_{\be K_{v}^{\lbe\nr}}$. Note that $\delta_{v}|\e\delta$ and $\delta^{\le\prime}_{v}|\e\delta^{\le\prime}$ for every $v$. Further, $\delta_{v}^{\le\rm nr}\e|\e\delta_{v}$ and $\delta_{v}^{\le\prime\le\rm nr}\e|\e\delta_{v}^{\le\prime}$ for every $v\notin S$.  

\smallskip

Next let
\begin{equation}\label{dmap}
D\colon \Z/\delta^{\le\prime}\e\Z\to \prod_{\textrm{all }v}\Z/\delta_{v}^{\le\prime}\e\Z
\end{equation}
be the natural diagonal map and set
\begin{equation}\label{dp}
\Delta^{\be\prime}={\rm l.c.m.}\{\delta_{v}^{\le\prime}\}.
\end{equation}
Then $\krn D=\Delta^{\be\prime}\e\Z/\lle\delta^{\le\prime}\le\Z$ and $\cok D$ is a finite abelian group of order
\begin{equation}\label{cokd}
|\cok D|=\left(\e\prod\delta_{v}^{\le\prime}\right)\!/\Delta^{\be\prime}.
\end{equation}

An application of the snake lemma to the exact and commutative diagram
\begin{equation}\label{snl}
\xymatrix{0\ar[r]&\Z/\delta^{\le\prime}\e\Z\ar[r]\ar[d]_(.43){D}&H^{1}\lbe(K,J\le)
\ar[r]\ar[d]& H^{1}\lbe(K, P\le)\ar[d]\ar[r]& 0\\
0\ar[r]&\displaystyle{\prod_{\textrm{all }v}}\Z/\delta_{v}^{\le\prime}\e\Z\ar[r]&\displaystyle{\prod_{\textrm{all }v}}H^{1}\lbe(K_{v},J\le)
\ar[r]&\displaystyle{\prod_{\textrm{all }v}} H^{1}\lbe(K_{v}, P\le)\ar[r]& 0,\\
}
\end{equation}
whose rows are induced by \eqref{seq2}, yields an exact sequence of abelian groups
\begin{equation}\label{s1}
0\to \Delta^{\be\prime}\e\Z/\lle\delta^{\le\prime}\le\Z\to \sha^{1}\lbe(J\e)\to \sha^{1}\lbe(P\le)\to \cok D.
\end{equation}

Similarly, let
\begin{equation}\label{dnrmap}
D_{\be \nr}^{\le S}\colon \Z/\delta^{\le\prime}\e\Z\to\prod_{v\notin  S}\Z/\delta_{v}^{\le\prime\le\lle\rm nr}\e\Z\times \prod_{v\in S}\Z/\delta_{v}^{\le\prime}\e\Z
\end{equation}
be the natural diagonal map and let
\begin{equation}\label{dpp}
\Delta^{\be\prime\prime}={\rm l.c.m.}\{\delta_{v}^{\le\prime\le\lle\rm nr},v\notin S,\delta_{v}^{\le\prime}, v\in S\e\}.
\end{equation}
Then there exists a canonical exact sequence of abelian groups
\begin{equation}\label{s2}
0\to \Delta^{\be\prime\prime}\Z/\lle\delta^{\le\prime}\le\Z\to \sha^{1}_{\le\nr}\lbe(S,J\e)\to \sha^{1}_{\le\nr}\lbe(S,P\le)\to \cok D_{\be \nr}^{\le S},
\end{equation}
where the groups $\sha^{1}_{\le\nr}\lbe(S,J\e)$ and $\sha^{1}_{\le\nr}\lbe(S,P\le)$ are given by  \eqref{sha-nr}.

\smallskip

The groups $\cok D$ and $\cok D_{\be \nr}^{\le S}$ are related as follows.
\begin{lemma}\label{dd} There exists a canonical exact sequence of finite abelian groups
\[
0\to\Delta^{\be\prime\prime}\Z/\lbe\Delta^{\be\prime}\le\Z\to
\displaystyle{\prod_{v\notin S}}\e\Z/\lbe d_{\le v}\e \Z\to\cok D\to \cok D_{\be \nr}^{\le S}\to 0,
\]
where the integers $\Delta^{\be\prime\prime}, \Delta^{\be\prime}$ and $d_{\le v}$ are given by \eqref{dp}, \eqref{dpp} and \eqref{dv}, respectively, and the maps $D$ and $D_{\be \nr}^{\le S}$ are given by \eqref{dmap} and \eqref{dnrmap}, respectively.
\end{lemma}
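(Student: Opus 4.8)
The plan is to realize $D_{\be \nr}^{\le S}$ as a composite of $D$ with a surjection and then feed this into Lemma \ref{ker-cok}. Since $\delta_{v}^{\le\prime\le\lle\rm nr}\mid\delta^{\le\prime}_{v}$ for every $v\notin S$, reduction modulo $\delta_{v}^{\le\prime\le\lle\rm nr}$ in the $v$-th coordinate for $v\notin S$, combined with the identity in the $v$-th coordinate for $v\in S$, defines a canonical surjection
\[
q\colon \prod_{\textrm{all }v}\Z/\delta_{v}^{\le\prime}\e\Z\twoheadrightarrow\prod_{v\notin S}\Z/\delta_{v}^{\le\prime\le\lle\rm nr}\e\Z\times\prod_{v\in S}\Z/\delta_{v}^{\le\prime}\e\Z
\]
with $q\circ D=D_{\be \nr}^{\le S}$; moreover, by the definition \eqref{dv} of $d_{\le v}$,
\[
\krn q=\prod_{v\notin S}\bigl(\delta_{v}^{\le\prime\le\lle\rm nr}\Z/\delta_{v}^{\le\prime}\e\Z\bigr)\simeq\prod_{v\notin S}\Z/\lbe d_{\le v}\e\Z.
\]

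Next I would apply Lemma \ref{ker-cok} to the pair of maps $D$ and $q$ above, whose composite is $D_{\be \nr}^{\le S}$. Because $q$ is surjective, $\cok q=0$, so the six-term sequence furnished by that lemma is
\[
0\to\krn D\to\krn D_{\be \nr}^{\le S}\to\krn q\to\cok D\to\cok D_{\be \nr}^{\le S}\to 0,
\]
and, since its first map is the inclusion of the subgroup $\krn D$ into $\krn D_{\be \nr}^{\le S}$, it is equivalent to the exact sequence
\[
0\to\krn D_{\be \nr}^{\le S}\big/\krn D\to\krn q\to\cok D\to\cok D_{\be \nr}^{\le S}\to 0.
\]
It remains to identify the leftmost term. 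As recorded just before \eqref{cokd}, $\krn D=\Delta^{\be\prime}\e\Z/\lle\delta^{\le\prime}\e\Z$, and the same computation --- the one already underlying the left-hand term of \eqref{s2} --- gives $\krn D_{\be \nr}^{\le S}=\Delta^{\be\prime\prime}\Z/\lle\delta^{\le\prime}\e\Z$. Since $\delta_{v}^{\le\prime\le\lle\rm nr}\mid\delta^{\le\prime}_{v}$ for all $v\notin S$ forces $\Delta^{\be\prime\prime}\mid\Delta^{\be\prime}$, the group $\krn D$ is a subgroup of $\krn D_{\be \nr}^{\le S}$ inside $\Z/\delta^{\le\prime}\e\Z$, with quotient canonically $\Delta^{\be\prime\prime}\Z/\lbe\Delta^{\be\prime}\e\Z$. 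Substituting this and the formula for $\krn q$ into the four-term sequence gives precisely the sequence of the lemma, and canonicity is inherited from that of $q$, of the diagonal maps $D$, $D_{\be \nr}^{\le S}$, and of Lemma \ref{ker-cok}. Finiteness of all the groups is automatic: $\delta^{\le\prime}_{v}\mid\delta_{v}=1$ for all but finitely many $v$, hence $d_{\le v}=1$ for almost all $v$ and $\prod_{v\notin S}\Z/\lbe d_{\le v}\e\Z$ is finite; $\cok D$ and $\cok D_{\be \nr}^{\le S}$ are finite by \eqref{cokd} and the analogous formula; and $\Delta^{\be\prime\prime}\Z/\lbe\Delta^{\be\prime}\e\Z$ is finite.

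The one point I would spell out rather than leave to the reader is that the maps appearing in the four-term sequence are the intended ones: that $\prod_{v\notin S}\Z/\lbe d_{\le v}\e\Z\to\cok D$ is the composite of the inclusion $\krn q\hookrightarrow\prod_{\textrm{all }v}\Z/\delta_{v}^{\le\prime}\e\Z$ with the projection onto $\cok D$, and that $\cok D\to\cok D_{\be \nr}^{\le S}$ is the map induced by $q$. Both follow by directly unwinding the construction of the six-term sequence in Lemma \ref{ker-cok}, so I do not expect any serious obstacle in this argument.
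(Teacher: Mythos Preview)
Your proof is correct and follows exactly the same approach as the paper: apply Lemma~\ref{ker-cok} to the pair of maps $D$ and the reduction $q$ whose composite is $D_{\nr}^{S}$, then identify the kernels and cokernels. You have simply written out in detail what the paper leaves implicit.
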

\begin{proof} This follows by applying Lemma \ref{ker-cok} to the pair of maps
\[
\Z/\delta^{\le\prime}\e\Z\overset{D}{\to}\prod_{\textrm{all }v}\Z/\delta_{v}^{\le\prime}\e\Z\to\prod_{v\notin S}\Z/\delta_{v}^{\le\prime\le\lle\rm nr}\e\Z\times \prod_{v\in S}\Z/\delta_{v}^{\le\prime}\e\Z
\]
whose composition is the map $D_{\be \nr}^{\le S}$.
\end{proof}

\smallskip

\begin{proposition}\label{key1} Assume that the integers $\delta_{v}^{\le\prime}$ are pairwise coprime. Then there exists a canonical exact sequence of finite abelian groups
\[
0\to \Z/\lbe d\e\Z\to \sha^{1}_{\le\nr}\lbe(S,\lbe J\e)/\le\sha^{1}\lbe(J\e)\to \sha^{1}_{\le\nr}\lbe(S,\lbe P\e)/\le\sha^{1}\lbe(\lbe P\le)\to 0,
\]
where $d$ is the integer \eqref{d} and the groups $\sha^{1}_{\le\nr}\lbe(S,\lbe J\e)$ and $\sha^{1}_{\le\nr}\lbe(S,\lbe P\e)$ are given by \eqref{sha-nr}.
\end{proposition}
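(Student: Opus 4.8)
The plan is to build the asserted sequence out of the three exact sequences \eqref{s1}, \eqref{s2} and the one in Lemma \ref{dd}, the key observation being that the coprimality hypothesis forces the cokernels $\cok D$ and $\cok D_{\nr}^{S}$ to vanish. First I would note that, since the $\delta_{v}^{\prime}$ are pairwise coprime, $\Delta^{\prime}={\rm l.c.m.}\{\delta_{v}^{\prime}\}=\prod_{v}\delta_{v}^{\prime}$, so formula \eqref{cokd} gives $|\cok D|=1$, i.e. $\cok D=0$; by Lemma \ref{dd} the group $\cok D_{\nr}^{S}$ is a quotient of $\cok D$, whence $\cok D_{\nr}^{S}=0$ as well. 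Consequently the sequences \eqref{s1} and \eqref{s2} become short exact sequences (their last terms being zero), with leftmost terms $\Delta^{\prime}\Z/\delta^{\prime}\Z=\krn D$ and $\Delta^{\prime\prime}\Z/\delta^{\prime}\Z=\krn D_{\nr}^{S}$, respectively.

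Next I would check that these two short exact sequences form a commutative ladder with injective vertical maps. Since $\delta_{v}^{\prime\,\nr}\mid\delta_{v}^{\prime}$ for $v\notin S$ and each $\delta_{v}^{\prime}\mid\Delta^{\prime}$, one has $\Delta^{\prime\prime}\mid\Delta^{\prime}$, hence an inclusion $\Delta^{\prime}\Z/\delta^{\prime}\Z\hookrightarrow\Delta^{\prime\prime}\Z/\delta^{\prime}\Z$; the remaining two verticals are the inclusions $\sha^{1}(J)\hookrightarrow\sha^{1}_{\nr}(S,J)$ and $\sha^{1}(P)\hookrightarrow\sha^{1}_{\nr}(S,P)$. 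Commutativity of the right-hand square is immediate, both horizontal maps there being induced by the map $H^{1}(K,J)\to H^{1}(K,P)$ of \eqref{seq2}. Commutativity of the left-hand square comes from unwinding the snake-lemma origin of \eqref{s1} and \eqref{s2}: in both, the leftmost map is induced by the \emph{same} inclusion $\Z/\delta^{\prime}\Z\hookrightarrow H^{1}(K,J)$ of \eqref{seq2}, restricted to $\krn D$ in the first case and to $\krn D_{\nr}^{S}$ in the second, and $\krn D\subseteq\krn D_{\nr}^{S}$ because $D_{\nr}^{S}$ factors through $D$. Applying the snake lemma to this ladder, and using that all three vertical maps are injective, yields the exact sequence
\[
0\to\Delta^{\prime\prime}\Z/\Delta^{\prime}\Z\to\sha^{1}_{\nr}(S,J)/\sha^{1}(J)\to\sha^{1}_{\nr}(S,P)/\sha^{1}(P)\to 0.
\]

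Finally I would identify the first term with $\Z/d\Z$: since $\cok D=0$, the exact sequence of Lemma \ref{dd} collapses to a canonical isomorphism $\Delta^{\prime\prime}\Z/\Delta^{\prime}\Z\isoto\prod_{v\notin S}\Z/d_{v}\Z$, and because the $d_{v}$ divide the pairwise coprime $\delta_{v}^{\prime}$ they are themselves pairwise coprime, so the Chinese remainder theorem gives $\prod_{v\notin S}\Z/d_{v}\Z\isoto\Z/d\Z$ with $d$ as in \eqref{d}. All three terms of the resulting sequence are finite: $\Z/d\Z$ is finite since $d_{v}\mid\delta_{v}^{\prime}$ and $\delta_{v}^{\prime}=1$ for all but finitely many $v$; $\sha^{1}_{\nr}(S,J)/\sha^{1}(J)\simeq C^{1}_{J,S}$ is finite by Lemma \ref{shaq=cp1} together with Remark \ref{fin}; and $\sha^{1}_{\nr}(S,P)/\sha^{1}(P)$, being a quotient of the latter, is finite too. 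The construction is manifestly canonical. The only mildly delicate point is the verification of the commutativity of the ladder, i.e. tracking through the definitions of the maps in \eqref{s1} and \eqref{s2}; everything else is a formal consequence of the coprimality hypothesis via \eqref{cokd} and Lemma \ref{dd}.
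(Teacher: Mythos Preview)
Your proposal is correct and follows essentially the same route as the paper's own proof: use the coprimality hypothesis together with \eqref{cokd} and Lemma \ref{dd} to kill $\cok D$ and $\cok D_{\nr}^{S}$, stack the resulting short exact sequences \eqref{s1} and \eqref{s2} into a ladder, apply the snake lemma, and then identify $\Delta^{\prime\prime}\Z/\Delta^{\prime}\Z$ with $\Z/d\Z$ via Lemma \ref{dd} and the Chinese remainder theorem. The paper is terser---it simply asserts the existence of the commutative ladder without justifying the vertical maps or the commutativity, and it omits the finiteness check---but your added details are all correct.
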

\begin{proof} The hypothesis shows that $\Delta^{\be\prime}$ \eqref{dp} equals $\prod\delta_{v}^{\le\prime}$, whence $\cok D=0$ by \eqref{cokd}. Now Lemma \ref{dd} shows that $\cok D_{\be \nr}^{\le S}=0$ as well. Thus there exists a canonical exact and commutative diagram of abelian groups
\[
\xymatrix{0\ar[r]&\Delta^{\be\prime}\e\Z/\lle\delta^{\le\prime}\le\Z\ar[r]\ar@{^{(}->}[d]&\sha^{1}\lbe(J\e)\ar@{^{(}->}[d]\ar[r]&\sha^{1}\lbe(P\le)\ar@{^{(}->}[d]\ar[r]& 0\\
0\ar[r]&\Delta^{\be\prime\prime}\Z/\lle\delta^{\le\prime}\le\Z\ar[r]&\sha^{1}_{\le\nr}\lbe(S,J\e)\ar[r]& \sha^{1}_{\le\nr}\lbe(S,P\le)\ar[r]&0
}
\]
whose top and bottom rows are the sequences \eqref{s1} and \eqref{s2}, respectively. An application of the snake lemma to the above diagram yields an exact sequence of finite abelian groups
\[
0\to \Delta^{\be\prime\prime}\Z/\lle\Delta^{\be\prime}\Z\to \sha^{1}_{\le\nr}\lbe(S,J\e)/\le\sha^{1}\lbe(J\e)\to \sha^{1}_{\le\nr}\lbe(S,P\le)/\le\sha^{1}\lbe(P\le)\to 0.
\]
Now, since $\cok D=0$, Lemma \ref{dd} shows that $\Delta^{\be\prime\prime}\Z/\lle\Delta^{\be\prime}\Z$ is canonically isomorphic to
$\prod_{\e v\notin S}\le\Z/\lbe d_{\le v}\le \Z$. Further, since the integers $\delta_{v}^{\le\prime}$ are pairwise coprime, so also are the integers $d_{\le v}$ \eqref{dv}. Thus the Chinese Remainder Theorem yields a canonical isomorphism
$\prod_{\e v\notin S}\le\Z/\lbe d_{\le v}\le \Z\simeq \Z/\lbe d\e\Z$, whence the proposition follows.
\end{proof}

\smallskip

Next, there exists a canonical exact sequence of abelian groups (see \cite[pp.~400-401]{ga03})
\[
0\to\pic\, X\to P(K\le)\to\br\e K\to\br\e X\to H^{1}\lbe(K,P\le)\to 0.
\]
The above sequence induces a functorial isomorphism of abelian groups
\begin{equation}\label{bra-iso}
\br_{\lbe\textrm{a}}\e  X\isoto H^{1}\lbe(K,P\le),
\end{equation}
where $\br_{\lbe\textrm{a}}\le  X$ is the group \eqref{bra} over $K$. If $L=K_{v}$ or $\knr$, where $v$ is a prime of $K$, there exists a commutative diagram of abelian groups
\begin{equation}\label{diag}
\xymatrix{\br_{\lbe\textrm{a}}\e X\ar[d]\ar[r]^(.45){\sim}&H^{1}\lbe(K,P\le)\ar[d]\\
\br_{\lbe\textrm{a}}\e X_{\be\lle L}\ar[r]^(.45){\sim}& H^{1}\lbe(L,P\le)\le,
}
\end{equation}
where the horizontal arrows are the maps \eqref{bra-iso} over $K$ and over $L\e$, the left-hand vertical arrow is induced by the pullback homomorphisms $\br\e K\to \br\e L$ and $\br\e X\to \br\, X_{\be\lle L}$ and the right-hand vertical map is the restriction map in Galois cohomology.

By the commutativity of diagram \eqref{diag}, the map \eqref{bra-iso} and its analogs over $K_{v}$ for every $v$ and over $\knr$ for every $v\notin S$ yield isomorphisms of abelian groups
\begin{eqnarray}
\bha_{\le\nr}(S, X\lbe)&\isoto&\sha^{1}_{\le\nr}\lbe(S,P\e)\label{bra=shap1}\\
\bha\lle(\lbe X)&\isoto& \sha^{1}(P\e)\label{bra=shap2},
\end{eqnarray}
where the groups $\bha_{\le\nr}(S, X\lbe)$ and $\bha(\lbe X\lbe)$ are given by \eqref{brasnr} and \eqref{bra-sha}, respectively, and the group $\sha^{1}_{\le\nr}\lbe(S,P\e)$ is defined by \eqref{sha-nr}.

\begin{proposition}\label{key2} Assume that the integers $\delta_{v}^{\le\prime}$ are pairwise coprime. Then there exists a canonical exact sequence of finite abelian groups
\[
0\to \Z/\lbe d\e\Z\to \sha^{1}_{\le\nr}\lbe(S,\lbe J\e)/\le\sha^{1}\lbe(J\e)\to \bha_{\le\nr}(S, X\lbe)/\e\bha(\lbe X)\to 0,
\]
where $d$ is the integer \eqref{d}, $\sha^{1}_{\le\nr}\lbe(S,\lbe J\e)$ is given by \eqref{sha-nr} and the groups $\bha_{\le\nr}(S, X\lbe)$ and 
$\bha(\lbe X)$ are given by \eqref{brasnr} and \eqref{bra-sha}, respectively.
\end{proposition}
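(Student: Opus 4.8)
The plan is to transport the exact sequence of Proposition~\ref{key1} across the functorial isomorphism \eqref{bra-iso} together with its local analogs, thereby replacing the Picard-scheme terms by the Brauer-group terms. Proposition~\ref{key1} already furnishes the canonical exact sequence
\[
0\to \Z/\lbe d\e\Z\to \sha^{1}_{\le\nr}\lbe(S,\lbe J\e)/\le\sha^{1}\lbe(J\e)\to \sha^{1}_{\le\nr}\lbe(S,\lbe P\e)/\le\sha^{1}\lbe(\lbe P\le)\to 0,
\]
so it suffices to exhibit a canonical isomorphism of finite abelian groups $\bha_{\le\nr}(S, X\lbe)/\e\bha(\lbe X)\isoto \sha^{1}_{\le\nr}\lbe(S,\lbe P\e)/\le\sha^{1}\lbe(\lbe P\le)$ and to substitute it for the third term.

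First I would record the inclusions $\bha(\lbe X)\subset \bha_{\le\nr}(S, X\lbe)$ and $\sha^{1}(P)\subset \sha^{1}_{\le\nr}\lbe(S,P\le)$. For every $v\notin S$ the pullback homomorphism $\br_{\lbe\textrm{a}}\e X\to \br_{\lbe\textrm{a}}\le X_{\be K_{v}^{\nr}}$ factors through $\br_{\lbe\textrm{a}}\le X_{\be K_{v}}$, and correspondingly $H^{1}\lbe(K,P\le)\to H^{1}\lbe(\knr,P\le)$ factors through $H^{1}\lbe(K_{v},P\le)$ (as already noted before Lemma~\ref{shaq=cp1}); hence a class killed in all of the groups $\br_{\lbe\textrm{a}}\e X_{\be K_{v}}$ (respectively $H^{1}\lbe(K_{v},P\le)$) is automatically killed in every local group occurring in the definition \eqref{brasnr} (respectively \eqref{sha-nr}).

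Next I would invoke diagram \eqref{diag} taken over $L=K_{v}$ for all primes $v$ and over $L=\knr$ for $v\notin S$: it asserts precisely that the isomorphism \eqref{bra-iso} carries the pullback homomorphisms defining $\bha(\lbe X)$ and $\bha_{\le\nr}(S, X\lbe)$ to the restriction homomorphisms defining $\sha^{1}(P)$ and $\sha^{1}_{\le\nr}\lbe(S,P\le)$; this is exactly what produces the isomorphisms \eqref{bra=shap1} and \eqref{bra=shap2}. Since both of the latter are restrictions of the single map \eqref{bra-iso}, the square
\[
\xymatrix{\bha(\lbe X)\ar@{^{(}->}[r]\ar[d]^{\sim}&\bha_{\le\nr}(S, X\lbe)\ar[d]^{\sim}\\ \sha^{1}(P)\ar@{^{(}->}[r]&\sha^{1}_{\le\nr}\lbe(S,P\le)}
\]
commutes, and therefore \eqref{bra-iso} induces the desired canonical isomorphism $\bha_{\le\nr}(S, X\lbe)/\e\bha(\lbe X)\isoto \sha^{1}_{\le\nr}\lbe(S,P\le)/\e\sha^{1}(P)$ on quotients.

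Feeding this isomorphism into the third term of the sequence of Proposition~\ref{key1} gives the asserted exact sequence, whose terms are finite because those of Proposition~\ref{key1} are. The only point that genuinely requires attention is the commutativity of the square just displayed — that is, the compatibility of \eqref{bra=shap1} and \eqref{bra=shap2} with the two inclusions — but this follows at once from the functoriality built into diagram \eqref{diag}, so I do not anticipate a real obstacle here; the substantive work has already been done in Proposition~\ref{key1}.
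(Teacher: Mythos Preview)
Your proposal is correct and follows exactly the paper's own approach: the proof there reads ``This is immediate from Proposition~\ref{key1} using the isomorphisms \eqref{bra=shap1} and \eqref{bra=shap2}.'' You have simply spelled out the compatibility of those two isomorphisms (both being restrictions of the single map \eqref{bra-iso}) that the paper leaves implicit.
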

\begin{proof} This is immediate from Proposition \ref{key1} using the isomorphisms \eqref{bra=shap1} and \eqref{bra=shap2}.
\end{proof}

Via the autoduality of the Jacobian \cite[Theorem 6.6]{mij}, the preceding proposition and Theorem \ref{main1} yield the second main result of this paper.

\begin{theorem}\label{main2}  Assume that the integers $\delta_{v}^{\le\prime}$ are pairwise coprime. Then there exists a canonical exact sequence of finite abelian groups
\[
0\to\Hom(\e\bha_{\le\nr}(S, X)\lbe/\le\bha(\lbe X),\Q/\Z\e)\to C_{\lbe J,\e S}/\varphi_{\lbe J,\e S}(\le T\e\sha^{1}\be(\be J\le)_{\rm div}\lbe)\to \Z/\lbe d\e\Z\to 0,
\]
where $C_{\be J\le,\e S}$ is the N\'eron $S$-class group of $J$ \eqref{ncg}, $\varphi_{\be\lle J\le,\le S}$ is the map \eqref{fi} and $d$ is the integer \eqref{d}.
\end{theorem}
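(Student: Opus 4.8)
The plan is to combine the two main inputs that are now available: Theorem~\ref{main1} applied to $A=J$ (using the autoduality $J^{t}\simeq J$ of the Jacobian from \cite[Theorem 6.6]{mij}) and Proposition~\ref{key2}. Theorem~\ref{main1}, via $J^{t}\simeq J$, supplies a canonical perfect pairing of finite abelian groups
\[
C_{\lbe J\le,\e S}/\varphi_{\be\lle J\le,\le S}\lbe(\le T\e\sha^{1}\be(\lbe J\le)_{\rm div}\lbe)\times \sha^{1}_{\le\nr}\lbe(S,J\e)/\e\sha^{1}\be(J\e)\to\Q/\Z,
\]
hence a canonical isomorphism $C_{\lbe J\le,\e S}/\varphi_{\be\lle J\le,\le S}\lbe(\le T\e\sha^{1}\be(\lbe J\le)_{\rm div}\lbe)\isoto\big(\sha^{1}_{\le\nr}\lbe(S,J\e)/\e\sha^{1}\be(J\e)\big)^{*}$. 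Separately, Proposition~\ref{key2} gives the exact sequence
\[
0\to \Z/\lbe d\e\Z\to \sha^{1}_{\le\nr}\lbe(S,\lbe J\e)/\le\sha^{1}\lbe(J\e)\to \bha_{\le\nr}(S, X\lbe)/\e\bha(\lbe X)\to 0.
\]

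First I would apply the exact contravariant functor $(-)^{*}=\Hom(-,\Q/\Z)$ to the sequence of Proposition~\ref{key2}. Since all three groups there are finite, $(-)^{*}$ is exact, so this produces a canonical exact sequence of finite abelian groups
\[
0\to\big(\bha_{\le\nr}(S, X\lbe)/\e\bha(\lbe X)\big)^{*}\to\big(\sha^{1}_{\le\nr}\lbe(S,\lbe J\e)/\le\sha^{1}\lbe(J\e)\big)^{*}\to(\Z/\lbe d\e\Z)^{*}\to 0.
\]
Next I would identify $(\Z/d\e\Z)^{*}$ with $\Z/d\e\Z$ via the canonical pairing $\Z/d\e\Z\times\Z/d\e\Z\to\Q/\Z$, $(a,b)\mapsto ab/d$, and identify $(\bha_{\le\nr}(S, X\lbe)/\e\bha(\lbe X))^{*}$ with $\Hom(\bha_{\le\nr}(S, X\lbe)/\e\bha(\lbe X),\Q/\Z)$ (this is literally the definition of the dual). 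Finally I would substitute the middle term using the isomorphism $\big(\sha^{1}_{\le\nr}\lbe(S,J\e)/\e\sha^{1}\be(J\e)\big)^{*}\isoto C_{\lbe J\le,\e S}/\varphi_{\be\lle J\le,\le S}\lbe(\le T\e\sha^{1}\be(\lbe J\le)_{\rm div}\lbe)$ coming from the perfect pairing above, obtaining exactly the asserted exact sequence
\[
0\to\Hom(\e\bha_{\le\nr}(S, X)\lbe/\le\bha(\lbe X),\Q/\Z\e)\to C_{\lbe J,\e S}/\varphi_{\lbe J,\e S}(\le T\e\sha^{1}\be(\be J\le)_{\rm div}\lbe)\to \Z/\lbe d\e\Z\to 0.
\]

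The potential obstacle is purely one of canonicity and bookkeeping rather than of substance: one must check that the autoduality isomorphism $J^{t}\simeq J$ is compatible with the construction of the pairing in Theorem~\ref{main1}, so that the resulting identification of $C_{\lbe J\le,\e S}/\varphi_{\be\lle J\le,\le S}(\cdots)$ with $\big(\sha^{1}_{\le\nr}(S,J)/\sha^{1}(J)\big)^{*}$ is genuinely canonical (independent of auxiliary choices), and that the identification $(\Z/d\e\Z)^{*}\simeq\Z/d\e\Z$ is the standard one. Both are routine: the pairing on the Jacobian is the classical one, and the functoriality is inherited from that of Grothendieck's pairing and Tate local duality used throughout Section~\ref{tres}. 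Once these compatibilities are noted, the exact sequence follows by the dualization argument above with no further computation.
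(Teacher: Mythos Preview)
Your proposal is correct and follows exactly the approach the paper takes: the paper's proof is the single sentence ``Via the autoduality of the Jacobian \cite[Theorem 6.6]{mij}, the preceding proposition and Theorem~\ref{main1} yield the second main result of this paper,'' and you have simply spelled out the dualization step in detail. The only content you add beyond the paper is the routine identification $(\Z/d\e\Z)^{*}\simeq\Z/d\e\Z$ and the observation that $(-)^{*}$ is exact on finite abelian groups, both of which the paper leaves implicit.
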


Since $d=\prod_{\e v\notin S}(\delta^{\le\prime}_{v}/\delta_{v}^{\le\prime\le\lle\rm nr})=1$ when 
$\delta_{v}^{\le\prime}=1$ for every $v\notin S$, the following statement is an immediate consequence of the theorem.

\begin{corollary}\label{cor2} Let $X$ be a proper, smooth and geometrically connected curve over $K$ with associated Jacobian variety $J$. Assume that $\delta_{v}^{\le\prime}=1$ for every prime $v\notin S$. Then there exists a canonical perfect pairing of finite abelian groups
\begin{equation}\label{jac}
C_{\lbe J,\e S}/\varphi_{\lbe J,\e S}(\le T\e\sha^{1}\be(\be J\le)_{\rm div}\lbe)\times \bha_{\le\nr}(S, X)\lbe/\le\bha(\lbe X)\to \Q/\Z.
\end{equation}
\end{corollary}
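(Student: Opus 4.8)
The plan is to read off Corollary~\ref{cor2} as the degenerate case $d=1$ of Theorem~\ref{main2}. The first step is to check that the hypothesis forces $d=1$. For every $v\notin S$ one has $\delta_{v}^{\le\prime\le\lle\rm nr}\mid\delta_{v}^{\le\prime}$, so $\delta_{v}^{\le\prime}=1$ gives $\delta_{v}^{\le\prime\le\lle\rm nr}=1$ and hence $d_{\le v}=\delta_{v}^{\le\prime}/\delta_{v}^{\le\prime\le\lle\rm nr}=1$ by \eqref{dv}; therefore $d=\prod_{v\notin S}d_{\le v}=1$ by \eqref{d}, and $\Z/\lbe d\e\Z=0$.

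The second step is to substitute $d=1$ into the conclusion of Theorem~\ref{main2}: its three-term exact sequence collapses at the right-hand end to a canonical isomorphism of finite abelian groups
\[
\Hom(\e\bha_{\le\nr}(S, X)\lbe/\le\bha(\lbe X),\Q/\Z\e)\isoto C_{\lbe J,\e S}/\varphi_{\lbe J,\e S}(\le T\e\sha^{1}\be(\be J\le)_{\rm div}\lbe).
\]
The third step is to convert this into the asserted pairing. For finite abelian groups a canonical isomorphism $N\isoto M^{\le *}$, with $M^{\le *}=\Hom(M,\Q/\Z)$, is the same datum as a perfect pairing $N\times M\to\Q/\Z$, $(\bar n,\bar m)\mapsto\phi(\bar n)(\bar m)$, where $\phi$ denotes the isomorphism; perfectness in the $M$-variable comes from the canonical identification $M\isoto M^{\le*\le*}$. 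Taking $N=C_{\lbe J,\e S}/\varphi_{\lbe J,\e S}(\le T\e\sha^{1}\be(\be J\le)_{\rm div}\lbe)$ and $M=\bha_{\le\nr}(S, X)\lbe/\le\bha(\lbe X)$ then yields \eqref{jac}. The same pairing arises, transparently, by combining the autoduality $J^{t}\simeq J$ of \cite[Theorem 6.6]{mij} with Theorem~\ref{main1} and with the $d=1$ degeneration of Proposition~\ref{key2}, which supplies $\sha^{1}_{\le\nr}\lbe(S,J\e)/\e\sha^{1}\lbe(J\e)\isoto\bha_{\le\nr}(S, X)\lbe/\le\bha(\lbe X)$ via \eqref{bra=shap1} and \eqref{bra=shap2}.

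The only step requiring real care is the coprimality bookkeeping behind the appeal to Theorem~\ref{main2}. The stated assumption $\delta_{v}^{\le\prime}=1$ for $v\notin S$ is exactly what produces $d=1$, and it makes each such period coprime to all the others; what must still be ensured is that the finitely many periods $\delta_{v}^{\le\prime}$ with $v\in S$ are pairwise coprime, since only then is $\cok D=0$ by \eqref{cokd} and only then does the $\Z/\lbe d\e\Z$ of Proposition~\ref{key2} collapse to give the isomorphism $\sha^{1}_{\le\nr}\lbe(S,J\e)/\e\sha^{1}\lbe(J\e)\isoto\bha_{\le\nr}(S, X)\lbe/\le\bha(\lbe X)$ used above. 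I would therefore record this pairwise coprimality explicitly at the outset (it is precisely the hypothesis of the theorem being specialized); granting it, every remaining step is entirely formal.
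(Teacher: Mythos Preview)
Your argument is precisely the paper's one-line deduction: observe that the hypothesis forces $d=1$ and then read off the pairing from Theorem~\ref{main2}. The caveat in your final paragraph is well taken and in fact identifies a gap that the paper itself glosses over: Theorem~\ref{main2} (via Proposition~\ref{key1}) requires the $\delta_{v}^{\le\prime}$ to be pairwise coprime for \emph{all} primes $v$, whereas the corollary's stated hypothesis $\delta_{v}^{\le\prime}=1$ for $v\notin S$ imposes no constraint on the finitely many periods with $v\in S$. Without that extra coprimality one cannot conclude $\cok D=0$, and the passage from $\sha^{1}_{\le\nr}(S,J)/\sha^{1}(J)$ to $\bha_{\le\nr}(S,X)/\bha(X)$ via Proposition~\ref{key2} is not justified. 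Your proposed remedy---recording the pairwise coprimality of the $\delta_{v}^{\le\prime}$ for $v\in S$ as part of the hypothesis---is the natural fix.
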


\section{Concluding remarks}

Let $K$ be a number field and let $G$ be the generic fiber of a reductive group scheme over $\spec\mathcal O_{K,\e S}$. By \cite[Theorem 5.5]{ga13}, there exists a canonical perfect pairing of finite abelian groups
\begin{equation}\label{cab}
C_{\rm ab}\lbe(G\le)\times {\rm Br}_{\rm a,\e nr}^{S}(G\le)/\e\bha(G\e)\to\Q/\Z,
\end{equation}
where $C_{\rm ab}\lbe(G\le)$ is the {\it abelian $S$-class group of $G$} \cite[Definition 2.8]{ga13} and the subgroup ${\rm Br}_{\rm a,\e nr}^{S}(G)$ of ${\rm Br}_{\rm a}(G\le)$ is defined in \cite[(5.4), p.~212]{ga13}. Thus, the pairing \eqref{jac} above may well be regarded as an analog of \eqref{cab} for a certain class of Jacobian varieties over $K$.

On the other hand, by \cite[Theorem 8.12, p.~65]{san}, there exists a perfect pairing of finite abelian groups
\[
A(G\e)\times \bha_{\omega}(G\e)/\e\bha(G\e)\to\Q/\Z,
\]
where $A(G\e)=(\prod G(K_{v}))/\e\overline{G(K\e)}$ is the defect of weak approximation on $G$
and $\bha_{\e\omega}(G\e)$ denotes the subgroup of ${\rm Br}_{\rm a}(G\le)$ of classes  that are locally trivial at all but finitely many primes of $K$. The preceding statement and Corollary \ref{cor2} together suggest that the group $C_{\lbe J,\e S}/\varphi_{\lbe J,\e S}(\le T\e\sha^{1}\be(\be J\le)_{\rm div}\lbe)$ is closely related to (but perhaps not identical with) the defect of weak approximation on $J$, as suggested by the referee. In a subsequent publication we will try to clarify the connections that may well exist between the N\'eron $S$-class group $C_{\lbe A,\e S}$ of an abelian variety $A$ (over any global field $K$\le), the defect of weak approximation on $A$ and the associated Brauer-Manin obstruction.

\end{document}